\newcommand{\C}{{\mathbb C}}       
\newcommand{\R}{{\mathbb R}}       
\newcommand{\HH}{{\mathcal H}}
\newcommand{\EE}{{\mathcal E}}
\newcommand{\diam}{{\rm diam}}
\newcommand{\dist}{{\rm dist}}
\newcommand{\fiproof}{{\hspace*{\fill} $\square$ \vspace{2pt}}}
\newcommand{\rf}[1]{{(\ref{#1})}}
\newcommand{\supp}{\operatorname{supp}}
\newcommand{\ve}{{\varepsilon}}
\newcommand{\vv}{{\vspace{2mm}}}
\newcommand{\vvv}{{\vspace{3mm}}}
\newcommand{\wt}[1]{{\widetilde{#1}}}
\newcommand{\rad}{{\operatorname{rad}}}
\newcommand{\capp}{\operatorname{Cap}}
\def\XXint#1#2#3{{\setbox0=\hbox{$#1{#2#3}{\int}$ }
\vcenter{\hbox{$#2#3$ }}\kern-.58\wd0}}
\newtheorem{theorem}{Theorem}[section]
\newtheorem{lemma}[theorem]{Lemma}
\newtheorem{mlemma}[theorem]{Main Lemma}
\newtheorem*{theorem*}{Theorem}
\theoremstyle{definition}
\theoremstyle{remark}
\newtheorem{rem}[theorem]{\bf Remark}
\numberwithin{equation}{section}
\newcommand{\brem}{\begin{rem}}
\newcommand{\erem}{\end{rem}}
\begin{document}

\title[Dimension of harmonic measure on some flat  sets of fractional dimension]
{The dimension of harmonic measure on some AD-regular flat sets of fractional dimension}

\begin{abstract}
In this paper it is shown that if $E\subset\R^{n+1}$ is an $s$-AD regular compact set, with $s\in [n-\frac12,n)$, and $E$ is contained in a hyperplane or, more generally, in an $n$-dimensional $C^1$ manifold, then the Hausdorff dimension of the harmonic measure for the domain $\R^{n+1}\setminus E$ is strictly smaller than $s$, i.e., than
the Hausdorff dimension of~$E$.

\end{abstract}

\author{Xavier Tolsa}

\address{ICREA, Barcelona\\
Dept. de Matem\`atiques, Universitat Aut\`onoma de Barcelona \\
and Centre de Recerca Matem\`atica, Barcelona, Catalonia.}
\email{xtolsa@mat.uab.cat}

\thanks{The author is supported by the European Research Council (ERC) under the European Union's Horizon 2020 research and innovation programme (grant agreement 101018680). Also partially supported by MICINN (Spain) under the grant PID2020-114167GB-I00, the María de Maeztu Program for units of excellence (Spain) (CEX2020-001084-M), and 2021-SGR-00071 (Catalonia).
}

\maketitle

\section{Introduction}

The study of the metric and geometric properties of harmonic measure is a classical topic in analysis. At least, this goes back  to the
work of the Riesz brothers \cite{RR} about the mutual absolute continuity between harmonic measure and arc-length measure on Jordan domains with rectifiable boundaries.
In the last years, the application of new ideas and techniques originating from harmonic analysis, PDE's, and geometric measure theory 
has allowed remarkable advances, especially when the boundary of the domain has codimension one. See for example 
\cite{AHM3TV}, \cite{AHMMT}, \cite{Azzam-semi}, \cite{GMT}, \cite{HMM}.
The case of codimension different from one is less studied and presents more difficulties, due to the fact that notions such as rectifiability or $L^2$ boundedness of Riesz transforms seem to play no role. In this paper we will focus on the
behavior of harmonic measure on AD-regular boundaries of codimension larger than one.

Recall that, given a domain $\Omega\subset\R^{n+1}$ and $p\in\Omega$, the harmonic measure $\omega^p$ for $\Omega$ with pole in $p$ is the
Borel measure supported in $\partial\Omega$ such that, for any $f\in C_c(\partial\Omega)$ which extends continuously to the whole $\overline \Omega$ and is harmonic in $\Omega$ (and vanishes at $\infty$ in case that $n>1$ and $\Omega$ is unbounded), we have
$$f(p) = \int_{\partial\Omega} f\,d\omega^p.$$
One of the most important problems about harmonic measure consists in estimating its dimension. Recall that, for any Borel measure $\nu$
in $\R^{n+1}$, its (Hausdorff) dimension, denoted by $\dim\nu$, is defined by
$$\dim\nu = \inf\,\{\dim F: F\subset \R^{n+1} \text{ Borel, } \nu(F^c)=0\},$$
where $\dim F$ stands for the Hausdorff dimension of $F$.
Notice that $\dim\omega^p$ does not depend on the precise pole $p$, assuming $\Omega$ to be connected. So quite often we will write
$\dim\omega$ instead of $\dim\omega^p$.

A very relevant result about the dimension of harmonic measure was obtained by Makarov \cite{Makarov} in 1985, when he showed
that for any simply connected domain in the plane, $\dim\omega=1$ (in spite of the fact that the boundary of the domain may have dimension larger than $1$). Later on, Jones and Wolff \cite{JW} showed that, for any arbitrary domain in the plane, $\dim\omega\leq 1$.
Subsequently, Wolff \cite{Wolff1} sharpened this result by proving that $\omega$ must be concentrated on a set of $\sigma$-finite length.
In the higher dimensional case $n>1$, the situation is more complicated. 
On the one hand, Bourgain \cite{Bourgain} proved in 1987 that there exists some constant $\ve_n>0$ just depending on $n$ such that
$\dim\omega\leq n+1-\ve_n$ for any $\Omega\subset\R^{n+1}$.
A natural guess would be that one could take $\ve_n=1$, so that
$\dim\omega\leq n$ for any domain of $\R^{n+1}$, analogously to what happens in the plane. However, this was disproved by Wolff in his celebrated work \cite{Wolff2}, where
he managed to construct a snowflake type domain $\Omega\subset\R^{n+1}$ satisfying $\dim\omega>n$.
A difficult open question in the area consists in finding the optimal value of the constant $\ve_n$ such that $\dim\omega\leq n+1-\ve_n$ for any $\Omega\subset\R^{n+1}$. See for example \cite{Jones-scaling}.

When the (Hausdorff) codimension of $\partial\Omega$ is not $1$ or $\partial\Omega$ is of fractal type, many examples show that we may have $\dim\omega<\dim\partial\Omega$. This is the so-called ``dimension drop'' for harmonic measure, which seems to be a frequent phenomenon. This was first
observed by Carleson \cite{Carleson} for some domains defined as complements of suitable Cantor type sets in the plane. 
Later on, Jones and Wolff showed a similar result for some boundaries $\partial\Omega$ satisfying some uniform disconnectedness property in the plane (see  \cite[Section X.I.2]{GM}). 
In \cite{Volberg1} and \cite{Volberg2} (see also \cite{MV}) Volberg proved the dimension drop of harmonic measure for a large class of Cantor repellers in the plane (see \cite{Volberg2}
for the notion of Cantor repeller and the precise statement of the result). Later on, Batakis \cite{Batakis} proved analogous results for the harmonic
measure for a large class of (complements of) self-similar sets in the plane. In \cite{UZ}
Urba\'nski and Zdunik showed that the dimension drop also occurs for the attractors of conformal iterated function systems (IFS) when either the limit set is contained in a real-analytic curve, if the IFS consists of similarities only, or if the IFS is irregular.
Another related result was obtained more recently by Batakis and Zdunik in \cite{BZ} for another class of IFS.

In view of the results described above, it is natural to wonder if the dimension drop for harmonic measure
occurs for more general, ``not dynamically generated'', subsets of $\R^{n+1}$ with boundaries with fractional dimension, like 
domains with AD-regular boundaries of fractional dimension. See Conjecture 1.9 from \cite{Volberg3}.
Recall that, given $s>0$ and $C_0>1$, a set
$E\subset\R^{n+1}$ is called AD-regular (or $s$-AD regular, or $(s,C_0)$-AD regular, if we want to be more precise) if
$$C_0^{-1}r^s\leq \HH^s(E\cap B(x,r)) \leq C_0\,r^s\quad \mbox{ for all $x\in E$ and $0<r\leq \diam(E)$.}$$
An answer in the affirmative to the above question was given by Azzam \cite{Azzam-drop} in the case $s\in (n,n+1)$ (i.e., for codimension smaller than $1$). The case of codimension larger than $1$ (with\footnote{When $s\leq n-1$ and $\partial\Omega$ is $s$-AD regular, this is a polar set and harmonic measure on $\partial\Omega$ is not defined.}
$s\in (n-1,n)$) is more challenging. In fact, at the time of writing this paper, Guy David, Cole Jeznach, and Antoine Julia \cite{DJJ} have informed me that, for some $s\in (0,1)$, they have
managed to construct an $s$-AD-regular set $E\subset \R^2$,  such that the harmonic measure for $\Omega=\R^2\setminus E$
is also $s$-AD-regular, and thus mutually absolutely continuous with $\HH^s|_E$. A previous (unpublished) example 
of Chris Bishop also showed that harmonic measure can be mutually absolutely continuous with $H^s$ for $s<1$ for some domains in the plane. However in Bishops's example $\partial\Omega$ is not $s$-AD regular. See also \cite{Batakis} for a related example consisting of a 
non-AD-regular Cantor set whose dimension coincides with the dimension of harmonic measure.

The main result of the present paper goes in the converse direction, and it shows that the dimension drop occurs for
$s$-AD regular subsets of hyperplanes in a suitable range of values of $s$ of codimension larger than $1$.
In the plane, for example, this occurs for $s$-AD regular sets contained in a line, for $s\in[\frac12,1)$. The precise result is the following.

\begin{theorem}\label{teo1}
For $n\geq1$ and $s\in [n-\frac12,n)$, let $E\subset \R^{n+1}$ be an $s$-AD regular compact set contained in a hyperplane. Let $\Omega=\R^{n+1}\setminus E$ and denote
by $\omega$ the harmonic measure for $\Omega$. Then
$\dim \omega < s.$
\end{theorem}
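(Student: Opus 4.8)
The plan is to exploit the fact that $E$ lies in a hyperplane $H\cong\R^n$, so harmonic measure for $\Omega=\R^{n+1}\setminus E$ is symmetric across $H$, and to compare the harmonic measure with a Riesz-type energy / capacity associated to the boundary. Write $\Omega=\Omega^+\cup\Omega^-\cup(H\setminus E)$, where $\Omega^\pm$ are the two open half-spaces; then for a pole $p\notin H$, $\omega^p$ sees $E$ only through the behavior of the Green function near $E$. The key quantitative device will be a bound on the size of the Green function $G(p,\cdot)$ (or its normal derivative) on Whitney regions attached to $E$, in terms of the $s$-dimensional density of $E$. Since $E$ has codimension $>1$, the complement $H\setminus E$ is "large" near $\HH^s$-a.e.\ point of $E$, and this is what forces the dimension drop: heuristically, from a Whitney cube $Q$ of $H$ of size $\ell(Q)$ at distance $\sim\ell(Q)$ from $E$, a Brownian path must "travel along" many such cubes in $H\setminus E$ before being absorbed by $E$, and each passage costs a definite factor.

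The concrete strategy I would follow is the one pioneered by Bourgain and refined by Wolff and Azzam: fix a small ball $B=B(x_0,r)$ centered on $E$, and estimate the harmonic measure $\omega^p(B)$ from a far-away pole by an iteration over dyadic scales. At each scale one shows that, conditioned on Brownian motion entering the "collar" around $E$ at scale $\rho$, there is a probability bounded below (by a constant $c<1$ depending only on $n,s,C_0$) that it exits a ball of comparable size having hit $H\setminus E$ — i.e.\ having been reflected rather than absorbed — because the portion of the sphere of radius $\sim\rho$ lying in $H\setminus E$ is "visible" with positive solid angle from generic points at distance $\rho$ from $E$. This uses the $s$-AD regularity with $s<n$ to get a lower bound on $\HH^n\big((H\setminus E)\cap B(x,C\rho)\big)\gtrsim\rho^n$ together with a projection/visibility argument. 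Summing the $-\log$ of these per-scale survival probabilities over $\sim\log(r/t)$ scales gives $\omega^p(B(x_0,t))\lesssim (t/r)^{s+\eta}$ for some $\eta=\eta(n,s,C_0)>0$ on a set of $\omega$-full measure, and by the mass distribution principle this yields $\dim\omega\le s-\eta'$ after translating the density decay into a Hausdorff dimension bound. The restriction $s\ge n-\tfrac12$ should enter precisely in guaranteeing the visibility/positive-probability step: it is where one can ensure that the "hole" $H\setminus E$ occupies a large enough proportion of an appropriate sphere relative to the $(n-1)$-dimensional "shadow" of $E$, e.g.\ via a comparison between the $s$-content of $E$ and the $(n-1)$-content of a projection.

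I would organize the argument through the $\alpha$-numbers / density estimates already standard in this circle: (i) a deterministic lemma, proved using the Poisson kernel for a half-space and the $s$-AD regularity, giving $\omega^p(B(x,\rho))\le (1-c)\,\omega^p(B(x,C\rho))$ whenever $p$ is outside $B(x,C\rho)$ and $x\in E$ — this is the replacement for Bourgain's estimate adapted to the flat, thin set $E$; (ii) iterate (i) to get polynomial decay $\omega^p(B(x,t))\le C(t/\rho)^{s+\eta}\,\omega^p(B(x,\rho))$; (iii) combine with the upper AD-regularity $\HH^s(E\cap B(x,\rho))\le C_0\rho^s$ and a Frostman-type/mass-distribution argument to conclude $\dim\omega<s$. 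The main obstacle I anticipate is step (i): one must quantify, uniformly in the position of $x$ on $E$ and using only the flatness of $E$ (not any self-similarity), how much harmonic measure "leaks" through $H\setminus E$ at each scale, and this is exactly where the hypothesis $s\le n-\tfrac12$ cannot be dropped — the visibility argument needs the codimension of $E$ within $H$ to be at least $\tfrac12$ so that the complement is "fat enough" in the relevant averaged sense. A secondary technical point is passing from the hyperplane case to an $n$-dimensional $C^1$ manifold, which should follow by a perturbation/flattening argument once the flat case is in place, since the relevant estimates are scale-invariant and stable under $C^1$ changes of variables.
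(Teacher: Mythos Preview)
Your proposal has a basic direction error. You aim for an \emph{upper} bound $\omega(B(x,t))\lesssim (t/r)^{s+\eta}\,\omega(B(x,r))$ and then invoke the mass distribution principle to conclude $\dim\omega\le s-\eta'$. But mass distribution runs the other way: a bound $\omega(B(x,r))\le Cr^{\alpha}$ for $\omega$-a.e.\ $x$ forces every set of full $\omega$-measure to have positive $\HH^{\alpha-\epsilon}$-measure, hence $\dim\omega\ge\alpha$, not $\le\alpha$. (In fact your estimate cannot hold uniformly on $E$: covering the $s$-AD regular set $E$ by $\approx r^{-s}$ balls of radius $r$ would give $1=\omega(E)\lesssim r^{\eta}\to0$.) To prove $\dim\omega<s$ one needs $\omega$ to be \emph{concentrated}: a \emph{lower} bound $\omega(B(x,r))\gtrsim r^{\alpha}$ with $\alpha<s$ on a set of full $\omega$-measure. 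Relatedly, your step~(i), $\omega^p(B(x,\rho))\le(1-c)\,\omega^p(B(x,C\rho))$, is nothing more than the H\"older decay coming from the CDC (valid for any $s>n-1$); iterating it produces $\omega(B(x,t))\lesssim t^{\beta}$ for \emph{some} $\beta>0$, with no link whatsoever to the value $s+\eta$ you need.

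The paper's mechanism is essentially the opposite of what you describe. Using $s<n$ and pigeonholing one finds, inside every ball $B(x,r)$, a point $y\in E$ sitting at the \emph{edge} of a hole in $E$ of size $\approx r$; by localization and the maximum principle, $\omega$ near $y$ dominates the harmonic measure for the complement of a model set (a segment in $\R^2$, a flat annulus in $\R^{n+1}$). An explicit conformal map in the plane gives the edge behaviour $\omega_{\mathrm{model}}(B(y,\rho))\approx(\rho/r)^{1/2}$, which lifts to $(\rho/r)^{n-1/2}$ in $\R^{n+1}$ and yields the \emph{lower} bound $\omega(B(y,\rho))/\omega(B(x,r))\gtrsim(\rho/r)^{n-1/2}$. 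This is where $s\ge n-\tfrac12$ enters: when $s>n-\tfrac12$ the density $\omega(B(y,\rho))/\rho^{s}$ blows up as $\rho\to0$, giving the oscillation needed for the Bourgain--Batakis--Azzam lemma (the endpoint $s=n-\tfrac12$ requires a logarithmic refinement). Note also that $s\ge n-\tfrac12$ means the codimension of $E$ inside the hyperplane is at \emph{most} $\tfrac12$, not at least $\tfrac12$; the point is that $E$ is \emph{thick} enough in $H$ for the edge-of-a-disk model to control $\omega$ from below, not that $H\setminus E$ is fat.
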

\vv

One can quantify the result above and show that $\dim \omega \leq s -\kappa$, with 
$\kappa = \kappa(n,s,C_0)>0$, for $E\subset\R^{n+1}$ being
$(s,C_0)$-AD regular. Further, the assumption that $E$ is contained in a hyperplane can be replaced by $E$ being contained in a $C^1$ n-dimensional manifold in $\R^{n+1}$ (see Theorem \ref{teoC1} below).

I don't know if the threshold $n-\frac12$ is sharp in Theorem \ref{teo1}. 
However, 
we can go a bit below the threshold $n-\frac12$ in the following sense:

\begin{theorem}\label{teo2}
For $s>0$, $C_0>1$, let $E\subset \R^{n+1}$ be an $(s,C_0)$-AD regular compact set contained in a hyperplane, and let $\Omega=
\R^{n+1}\setminus E$. 
 Then there exists $\ve>0$ small enough depending on $n$ and $C_0$ such that if $s\in [n-\frac12-\ve,n)$, we have $\dim \omega<s$.
 \end{theorem}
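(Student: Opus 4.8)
The plan is to revisit the quantitative form of Theorem~\ref{teo1} — which gives $\dim\omega\le s-\kappa(n,s,C_0)$ with $\kappa>0$ for $s\in[n-\frac12,n)$ — and to isolate the single place where the hypothesis $s\ge n-\frac12$ is actually used. In arguments of this type the drop is obtained by showing that, for $\omega$-a.e.\ $x\in E$ and infinitely many scales $r$, one has $\omega(B(x,r))\ge r^{s-\kappa}$; this is proved scale by scale by comparing the harmonic measure with the $s$-dimensional density $\HH^s|_E$ and exploiting the flatness of $E$, which at scale $2^{-k}$ yields a definite ``concentration gain'' at the cost of an error $a_k$. I expect the role of the threshold to be precisely the control of the accumulated errors: a bound of the form $a_k\lesssim C(n,C_0)\,2^{-k(2s-2n+1)}$, so that $\sum_k a_k$ is a convergent geometric series exactly when $s>n-\frac12$, the bound degenerating as $s\downarrow n-\frac12$. (Equivalently, if the threshold enters through a single Carleson-type or energy integral over a Whitney region, the same remarks apply to its local pieces.)

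To cross the threshold I would \emph{truncate}: run the comparison only down to a fixed scale $2^{-N_0}$, with $N_0=N_0(n,C_0)$ to be chosen. Writing $\ve=n-\frac12-s>0$, over these scales the accumulated error is $\sum_{k\le N_0}a_k\lesssim C(n,C_0)\sum_{k\le N_0}2^{2k\ve}$, which for $\ve\le\ve(n,C_0)$ stays bounded by a constant depending only on $n$ and $C_0$; meanwhile the per-scale concentration gain coming from flatness is bounded below by $c_0=c_0(n,C_0)>0$ and does not degenerate as $s\to n-\frac12$. Balancing these — fixing $N_0$ large in terms of $c_0$ and of the error constant, and then taking $\ve$ small in terms of $N_0$ — one obtains the localized (corona) form of the drop statement on all scales in $[2^{-N_0},1]$: there is $\kappa_0=\kappa_0(n,C_0)>0$ such that a definite proportion of the scales $2^{-j}$, $0\le j\le N_0$, exhibit the growth $\omega(B(x,2^{-j}))\ge 2^{-j(s-\kappa_0)}$.

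It remains to \emph{iterate this down to all scales}. The class of $(s,C_0)$-AD regular compact subsets of a hyperplane is invariant under translations and dilations with the same constant $C_0$, so the truncated statement applies, with the same $N_0$ and $\kappa_0$, inside every cube $Q$ centred on $E$ with $\ell(Q)\le 1$. Using the maximum principle — together with the corkscrew points $x_Q+\ell(Q)e_{n+1}\in\Omega$ and the standard comparison of $\omega$ with the Green function and with the harmonic measure of the localized domain $\R^{n+1}\setminus(E\cap 2Q)$ (change of pole, Bourgain's estimate) — one propagates the gain through the dyadic tree of scales $2^{-N_0},2^{-2N_0},\dots$. A positive proportion of all scales then exhibits the required growth of $\omega(B(x,r))/r^s$, which gives $\dim\omega\le s-\kappa_0/N_0<s$.

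The main obstacles I anticipate are two. The first is verifying that $s\ge n-\frac12$ is genuinely used only in the place described: if the proof of Theorem~\ref{teo1} also relies on the threshold through a quantity that is merely in $L^2$ (not in $L^2_{\rm loc}$ with summable pieces), or through the boundedness of a Riesz-type operator on $E$ that truly fails for $s<n-\frac12$, then truncation alone is insufficient and one needs a quantitative substitute for that step. The second is making the iteration uniform: one must know the single-block drop statement is local and self-reproducing, which requires the comparison between $\omega$ restricted to a small cube and the harmonic measure of the corresponding localized domain to hold with constants independent of scale, location, and the fine structure of $E$ — routine for flat AD-regular sets, but the technical heart of the matter.
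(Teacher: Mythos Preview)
Your proposal rests on a guess about the structure of the proof of Theorem~\ref{teo1} that does not match what actually happens, and the gap is exactly the one you flag as your first ``anticipated obstacle''. The threshold $s=n-\tfrac12$ does \emph{not} enter through the convergence of an error series $\sum_k a_k$ with $a_k\lesssim 2^{-k(2s-2n+1)}$; there is no such accumulated error. The mechanism is a direct lower bound: one finds a touching point $y\in E\cap B(x,r)$ (a point on $\partial B_1$ for an empty ball $B_1\subset B(x,r)$ with $r_1\approx r$) and, after localizing and comparing with the harmonic measure of a flat annulus (or a segment, in the plane) whose explicit form gives $\omega_3^{x_1}(B(y,\rho))\gtrsim(\rho/r)^{n-1/2}$, one obtains
\[
\frac{\omega(B(y,\rho))}{\omega(B(x,r))}\;\gtrsim\;\Big(\frac{\rho}{r}\Big)^{n-\frac12}.
\]
When $s>n-\tfrac12$ this immediately yields the first alternative in Lemma~\ref{lem-dim} for $\rho/r$ small; there is nothing to truncate.

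The extension to $s\le n-\tfrac12$ is therefore not by truncation of a divergent sum but by a \emph{contradiction/bootstrap} argument: assume \rf{eq1} fails, so that $\omega(B(y,\eta))/\omega(B(x,r))\approx_M(\eta/r)^s$ for all $\rho\le\eta\le r$, and feed this assumed regularity of $\omega$ back into the comparison. Writing $\omega_2^{x_1}(\bar B(y,\rho/2))=\int_{E_3}\omega_2^z(\bar B(y,\rho/2))\,d\omega_3^{x_1}(z)$ and using empty balls $B_k\subset A(y,2^k\rho,2^{k+1}\rho)$ together with the assumption (via change of pole) to bound $\omega_2^z(\bar B(y,\rho/2))\gtrsim M^{-2}2^{-ks}$, one gets a sum $\sum_{k=1}^{N}2^{k(n-\frac12-s)}(\rho/r)^{n-1/2}$. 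For $s=n-\tfrac12$ this gives an extra factor $N\approx\log(r/\rho)$; for $s=n-\tfrac12-\ve$ it gives an extra factor $\approx\ve^{-1}$. Either contradicts the assumed upper bound for $N$ large, resp.\ $\ve$ small. Your truncate-then-iterate scheme does not engage with this at all: the ``gain'' and the ``threshold'' live in the same lower-bound computation, and crossing the threshold requires using the failure of the dichotomy as an input, not cutting off scales.
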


On the other hand, Theorem \ref{teo1} does not hold for $s$ close enough to $n-1$. Indeed, some of the aforementioned examples of the authors in \cite{DJJ} are $s$-AD regular subsets of
the real line in the plane (with $s<1/2$, of course). So an interesting open problem consists in finding the sharp threshold $s_0$ such that for all $s$-AD regular sets
with $s\in (s_0,1)$ contained in a line the dimension drop for harmonic measure occurs.
Also, for $s$-AD regular sets not contained in a line, it is an open question if a similar threshold exists.

The proofs of Theorem \ref{teo1} and Theorem \ref{teo2} rely on an idea originating from Bourgain \cite{Bourgain} (see Lemma \ref{lem-dim}) and they follow
an approach similar to the one of \cite{Batakis} and
\cite{Azzam-drop}. Using touching point arguments, the maximum
principle, and suitable modifications of the domain, we are led to  some estimates involving the harmonic measure for the complement of a segment in the plane and for the complement of a suitable flat annulus for $n>1$. In the plane, such harmonic measure can be computed explicitly by means of a conformal transformation while in higher dimensions we need more elaborated estimates which also rely on the conformal
transformation from the planar case. The arguments make an essential use of the fact that the boundary $\partial\Omega$ is contained in
a hyperplane and so they cannot be extended to arbitrary $s$-AD regular sets.
\vv


\section{Preliminaries}

In the paper, constants denoted by $C$ or $c$ depend just on the dimension and perhaps other fixed
parameters, such as the parameter $s$ in Theorem \ref{teo1}, for example. We will write $a\lesssim b$ if there is $C>0$ such that $a\leq Cb$ . We write $a\approx b$ if $a\lesssim b\lesssim a$.


\subsection{Capacities and the capacity density condition}\label{secnew}

The set of (positive) Radon measures in $\R^{n+1}$ is denoted by $M_+(\R^{n+1})$. The Hausdorff $s$-dimensional measure and Hausdorff $s$-dimensional content are denoted by $\HH^s$ and $\HH^s_\infty$, respectively.

The fundamental solution of the negative Laplacian in $\R^2$ is
$$\EE_2(x) = \frac1{2\pi}\,\log \frac1{|x|},$$
while in higher dimensions, i.e., in $\R^{n+1}$, $n\geq2$, it equals
$$\EE_{n+1}(x) = \frac{c_n}{|x|^{n-1}},$$
where $c_n= (n-1)\HH^n(\mathbb S^n)$, with $\mathbb S^n$ being the unit hypersphere in $\R^{n+1}$.
In any case, for a measure $\mu$ in $\R^{n+1}$, we consider the energy
$$I(\mu) = \iint \EE_{n+1}(x-y)\,d\mu(x)\,d\mu(y)$$ 
and, for $F\subset\R^{n+1}$ we define the capacity
$$\capp(F) = \frac 1{\inf_{\mu\in M_1(F)} I(\mu)},
$$
where the infimum is taken over all {\em probability} measures $\mu$ supported on $F$. 
In the planar case, $\capp(F)$ is the Wiener capacity of $F$, and in higher dimensions this is the Newtonian capacity. 
From now on, in the plane we will write $\capp_W(F)$ instead of $\capp(F)$.
In fact, in the plane it is more convenient to work with the logarithmic capacity, defined by
$$\capp_L(F) = e^{-\frac{2\pi}{\capp_W(F)}}.$$

Recall that we denote by $\omega$ (and sometimes $\omega_\Omega$) the harmonic measure on an open set $\Omega$.  The following result is well known.
See for example Lemma 2.1 from \cite{Tolsa-IMRN}.

\begin{lemma}
\label{lembourgain}
 For $n\geq2$, let $\Omega\subset \R^{n+1}$ be open and let $B$ be a closed ball centered in $\partial\Omega$. Then 
\[ \omega^{x}(B)\geq c(n) \frac{\capp(\tfrac14 B\setminus\Omega)}{\rad(B)^{n-1}}\quad \mbox{  for all }x\in \tfrac14 B\cap \Omega ,\]
with $c(n)>0$.
\end{lemma}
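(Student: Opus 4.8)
The plan is to use the equilibrium potential of the compact set $F:=\tfrac14 B\setminus\Omega$ (which is disjoint from $\Omega$) as a comparison function. We may assume $\capp(F)>0$, since otherwise the right-hand side is $0$. Let $\mu\in M_1(F)$ be the equilibrium measure of $F$, i.e.\ the minimizer of $I(\mu)$ over probability measures on $F$, so that $I(\mu)=1/\capp(F)$, and set
$$h(x)\ :=\ \capp(F)\int\EE_{n+1}(x-y)\,d\mu(y)\ =\ c_n\,\capp(F)\int\frac{d\mu(y)}{|x-y|^{n-1}}\,.$$
By classical potential theory, $h$ is harmonic on $\R^{n+1}\setminus F$, hence on $\Omega$, and $0\le h\le1$ on $\R^{n+1}$, the last bound being the maximum-principle estimate $U^\mu\le I(\mu)$ for the equilibrium potential (it is in fact the capacitary potential of $F$, equal to $1$ quasi-everywhere on $F$, though only $0\le h\le1$ will be used). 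This is the one place where the hypothesis $n\ge2$ enters: it makes $\EE_{n+1}$ a positive kernel of negative homogeneity, so that $h$ is bounded; for $n=1$ one would have to argue with logarithmic potentials and the statement takes a different shape.

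Next I would record two elementary estimates, both immediate from the integral formula together with $\supp\mu\subset F\subset\tfrac14 B$ and $\mu(F)=1$; write $\rho:=\rad(B)$. For $x\in\tfrac14 B$ one has $|x-y|\le\diam(\tfrac14 B)=\tfrac12\rho$ for all $y\in\supp\mu$, hence $h(x)\ge c_n\,\capp(F)\,(\tfrac12\rho)^{1-n}$. For $\xi\in\partial B$ one has $|\xi-y|\ge\dist(\xi,\tfrac14 B)\ge\tfrac34\rho$ for all $y\in F$, hence $h(\xi)\le c_n\,\capp(F)\,(\tfrac34\rho)^{1-n}=:\lambda$. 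Since $\tfrac12\rho<\tfrac34\rho$ and $n\ge2$, these leave a genuine gap: for $x\in\tfrac14 B$,
$$h(x)-\lambda\ \ge\ c_n\,\capp(F)\,\rho^{1-n}\big(2^{n-1}-(4/3)^{n-1}\big)\ =:\ \frac{c(n)\,\capp(F)}{\rho^{n-1}},\qquad c(n)>0.$$

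It remains to prove $\omega^x_\Omega(B)\ge h(x)-\lambda$ for $x\in\tfrac14 B\cap\Omega$, which I would get from two comparisons using the bounded open set $D:=\interior B\setminus F$; note $D\supset\interior B\cap\Omega\ni x$, and that $\partial D$ is a disjoint union of a portion contained in $F$ and a portion contained in $\partial B$, the two separated by distance $\ge\tfrac34\rho$. First, $h$ is harmonic in $D$, with $\limsup h\le1$ at the boundary points in $F$ and $\limsup h\le\lambda$ at those on $\partial B$ (where $h$ is moreover continuous); comparing $h$ with the harmonic function $z\mapsto\omega^z_D(F)+\lambda$ — which has boundary values $1+\lambda$ on $\partial D\cap F$ and $\lambda$ on $\partial D\cap\partial B$ — the maximum principle in $D$ gives $\omega^x_D(F)\ge h(x)-\lambda$. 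Second, $\omega^x_D(F)\le\omega^x_\Omega(B)$: a Brownian motion started at $x$ that reaches $F$ before leaving $\interior B$ has, since $F\subset\R^{n+1}\setminus\Omega$, already exited $\Omega$ at a point of the path still inside $\interior B$, hence at a point of $\partial\Omega$ lying in $B$; thus the event ``hit $F$ before $\partial B$'' is contained in the event ``exit $\Omega$ inside $B$''. (The same comparison follows equally from the maximum principle in $\interior B\cap\Omega$.) Combining, $\omega^x_\Omega(B)\ge h(x)-\lambda\ge c(n)\,\capp(F)\,\rho^{1-n}$ for $x\in\tfrac14 B\cap\Omega$, as claimed.

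The arithmetic is trivial; the part that needs care, and that I would treat as the main obstacle, is the two maximum-principle comparisons and the classical facts behind them: the bound $U^\mu\le I(\mu)$ that gives $h\le1$; the fact that irregular boundary points of $F$ and of $\partial\Omega$ form polar, hence negligible, sets for the harmonic measures in play; and, for the first comparison, that $\partial D$ really does split into two clopen pieces because $F$ and $\partial B$ lie at positive distance. One should also watch the ordering of the geometric constants: the lower bound for $h$ at $x$ carries $(\tfrac12\rho)^{1-n}$ while the upper bound on $\partial B$ carries $(\tfrac34\rho)^{1-n}$, and it is precisely the inequality $\tfrac12\rho<\tfrac34\rho$, together with $n\ge2$, that makes $h(x)-\lambda$ a positive multiple of $\capp(F)/\rho^{n-1}$.
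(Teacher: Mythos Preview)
Your proof is correct and follows essentially the same strategy the paper uses for the planar analogue (Lemma~\ref{lemCL1}), which is the only proof the paper actually supplies here (for Lemma~\ref{lembourgain} itself the paper just cites \cite{Tolsa-IMRN}): take the capacitary potential of $F=\tfrac14 B\setminus\Omega$, bound it below on $\tfrac14 B$ and above on $\partial B$ via the elementary distance estimates, and conclude by the maximum principle. Your two-step comparison through the auxiliary domain $D=\interior B\setminus F$ is a minor detour---since $h\le\lambda$ on all of $B^c$ (not just $\partial B$) and $h-\lambda\le h\le1$ everywhere, you could compare $h-\lambda$ with $\omega_\Omega^{\,\cdot}(B)$ directly in $\Omega$, exactly as the paper does with $v/\sup v$ in the planar case---but your route is perfectly valid.
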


An analogous result holds in the plane. The precise statement is the following. 
\begin{lemma}\label{lemCL1}
Let $\Omega\subset \R^{2}$ be open and let $B$ be a closed ball centered in $\partial\Omega$. Then 
$$ \omega^{x}(B)\gtrsim 
 \frac1{\log\dfrac{\rad(B)}{\capp_L(\frac14B\setminus\Omega)}}\qquad \mbox{ for all $x\in \tfrac14 B\cap \Omega$.}
 $$
\end{lemma}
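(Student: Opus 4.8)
The plan is to prove this in the classical way, by comparing $\omega^{x}(B)$ with the logarithmic equilibrium potential of $F:=\tfrac14 B\setminus\Omega$ through the maximum principle. By translating and dilating we may assume that $B=\bar B(0,1)$ is centered at the origin, so that $0\in\partial\Omega$: indeed $\omega$ is invariant under isometries of $\R^{2}$ and under dilations, while $\rad(B)$ and $\capp_L$ scale linearly, so the quotient $\rad(B)/\capp_L(\tfrac14 B\setminus\Omega)$ is scale invariant. We may also assume $\capp_L(F)>0$, since otherwise the right-hand side is $0$ and there is nothing to prove. Note that $F\subset\bar B(0,\tfrac14)$ is compact and $\capp_L(F)\le\capp_L(\bar B(0,\tfrac14))=\tfrac14$, so $V:=\log\frac1{\capp_L(F)}\ge\log 4$.

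First I would invoke Frostman's theorem: if $\mu$ is the probability equilibrium measure of $F$, then its potential $U_\mu(z):=\int\log\frac1{|z-w|}\,d\mu(w)$ is superharmonic in $\C$, harmonic in $\C\setminus F\supset\Omega$, satisfies $U_\mu\le V$ everywhere on $\C$, and $U_\mu=V$ quasi-everywhere on $F$. Since $\supp\mu\subset\bar B(0,\tfrac14)$, one reads off the elementary pointwise bounds $U_\mu\ge\log 2$ on $\tfrac14 B$ (because $|z-w|\le\tfrac12$ there) and $U_\mu\le\log\tfrac43$ on $\partial B(0,1)$ (because $|z-w|\ge\tfrac34$ there), while $U_\mu\ge\log\tfrac45$ on $\mathring B(0,1)$, so $U_\mu$ is bounded on the unit ball.

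The core is a maximum principle argument on the bounded open set $\Omega_1:=\Omega\cap\mathring B(0,1)$. Put $a:=(V-\log\tfrac43)^{-1}>0$ and $b:=-a\log\tfrac43$, so that $aV+b=1$ and $a\log\tfrac43+b=0$, and set $\psi:=aU_\mu+b$, which is harmonic and bounded in $\Omega_1$. On the boundary of $\Omega_1$ there are two kinds of points: at regular points of $\partial\Omega\cap\mathring B(0,1)$ — which lie in the interior of the closed ball $B$ — the function $\omega^{\cdot}_\Omega(B)$ tends to $1$, whereas $\psi\le aV+b=1$ by the global Frostman bound; at points of $\partial B(0,1)\cap\bar\Omega$ one has $\omega^{\cdot}_\Omega(B)\ge 0$, whereas $\psi\le a\log\tfrac43+b=0$. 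Hence $\psi-\omega^{\cdot}_\Omega(B)\le 0$ quasi-everywhere on $\partial\Omega_1$, and since this difference is bounded and harmonic in $\Omega_1$, the maximum principle (the exceptional polar set on $\partial\Omega_1$ being $\omega_{\Omega_1}$-null) gives $\psi\le\omega^{\cdot}_\Omega(B)$ throughout $\Omega_1$. Finally, for $x\in\tfrac14 B\cap\Omega\subset\Omega_1$,
$$\omega^{x}_\Omega(B)\ \ge\ \psi(x)\ =\ aU_\mu(x)+b\ \ge\ a\log 2+b\ =\ a\log\tfrac32\ =\ \frac{\log\tfrac32}{V-\log\tfrac43}\ \ge\ \frac{\log\tfrac32}{V},$$
which, after undoing the normalization, is precisely the claimed estimate with implicit constant $\log\tfrac32$.

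The step I expect to require the most care is the passage from the boundary inequality to the interior inequality on $\Omega_1$: one must allow that $\Omega_1$ may be irregular and disconnected, check that $\omega^{\cdot}_\Omega(B)\to 1$ at quasi-every boundary point of $\Omega$ lying in the interior of $B$ (using Kellogg's theorem, that irregular boundary points form a polar set), and observe that the union of those irregular points with the polar set where $U_\mu<V$ on $F$ is again polar, hence negligible for $\omega_{\Omega_1}$; only then does the maximum principle apply in the stated form. Everything else is elementary once Frostman's theorem and the two crude bounds on $U_\mu$ are in place.
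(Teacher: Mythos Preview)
Your proof is correct and follows essentially the same route as the paper's: construct a harmonic comparison function from the equilibrium potential of $F=\tfrac14 B\setminus\Omega$, bound it above on $\partial B$ and below on $\tfrac14 B$ by the elementary distance estimates, and apply the maximum principle. The differences are only in packaging: you work with the probability equilibrium measure and Robin constant $V=\log(1/\capp_L(F))$ and localize the comparison to $\Omega\cap\mathring B$, whereas the paper uses the Wiener normalization $\|\EE_2*\mu\|_\infty\le 1$ and compares on all of $\Omega$; and you are explicit about the polar set of irregular boundary points, which the paper's one-line appeal to the maximum principle leaves implicit.
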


I provide the detailed proof below because it is not easy to find in the literature.

\begin{proof}
Denote $r=\rad(B)$.
Replacing $\Omega$ by $\frac1{4r}\,\Omega$ if necessary, we can assume that $\diam(B)<1$.
Then, denoting $F=\frac14B\setminus\Omega$, the following identity  holds:
$$\capp_W(F)=\sup\big\{\mu(F):\mu\in M_+(\R^2),\,\supp\mu\subset F,\|\EE_2*\mu\|_{\infty}\leq1\big\}.$$

Let $\mu$ be the optimal measure for this supremum, so that $\supp\mu\subset F$, $\mu(F) =\capp_W(F)$, and the function $u:= \EE_2*\mu$ is harmonic out of $F$ and it satisfies 
$\|u\|_\infty\leq 1$. For all $z\in\frac14 B$ and all $y\in F$ we have $|z-y|\leq \frac12\,r$. Therefore,
$$u(z) = \frac1{2\pi} \int \log\frac1{|z-y|}\,d\mu(y) \geq \frac1{2\pi} \int \log\frac2{r}\,d\mu(y) = \frac{\mu(F)}{2\pi} \,\log\frac2{r}\qquad \mbox{ for all $z\in \tfrac14 B$.}
 $$
Also, for $z\in B^c$, we have $\dist(z,\supp F)\geq \frac34r(B)$, and thus
$$u(z) \leq \frac1{2\pi}\int \log\frac4{3r} \,d\mu(y) = 
\frac{\mu(F)}{2\pi} \,\log\frac4{3r}\qquad \mbox{ for all $z\in B^c$.}
 $$
 
Consider now the function
$$v = u - \frac{\mu(F)}{2\pi} \,\log\frac4{3r}.$$
Observe that
$$v(z) \geq \frac{\mu(F)}{2\pi} \,\log\frac2{r} - \frac{\mu(F)}{2\pi} \,\log\frac4{3r}
= \frac{\mu(F)}{2\pi} \,\log\frac32 \qquad \mbox{ for all $z\in \tfrac14 B$}
 $$
and
$$v(z) \leq 0\qquad \mbox{ for all $z\in B^c$.}
 $$
By the maximum principle and the fact that $x\in\frac14B$ we deduce that
$$\omega^x(B) \geq \frac{v(x)}{\sup v}\geq \frac{\mu(F)}{2\pi\sup v} \,\log\frac32 = c\,\frac{\capp_W(F)}{\sup v}.
$$
Regarding $\sup v$, taking into account that $\|u\|_\infty\leq1$, it is clear that
$$\sup v\leq 1- \frac1{2\pi}\,\log\frac4{3r}\,{\mu(F)}  = 1 - \frac1{2\pi}\,\log\frac4{3r}\,\capp_W(F)
\leq 1 - \frac1{2\pi}\,\log\frac1{r}\,\capp_W(F).$$
Therefore,
$$\omega^x(B) \geq c\,\frac{\capp_W(F)}{1 - \frac1{2\pi}\,\log\frac1{r}\,\capp_W(F)} =
c'\,\frac{1}{\log\dfrac{1}{\capp_L(F)} - \log\dfrac1{r}} = c'\,\frac{1}{\log\dfrac{r}{\capp_L(F)}}. 
$$
\end{proof}

\vv

\begin{lemma}\label{lemcap-cont}
Let $E\subset\R^{n+1}$ be compact and $n-1<s\leq n+1$. In the case $n>1$, we have
$$\capp(E) \gtrsim_{s,n} \HH_\infty^s(E)^{\frac{n-1}s}.$$
In the case $n=1$, we have
$$\capp_L(E) \gtrsim_s \HH_\infty^s(E)^{\frac1s}.$$
\end{lemma}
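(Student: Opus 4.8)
The plan is to invoke Frostman's lemma to produce a measure on $E$ with $s$-growth and mass comparable to $\HH^s_\infty(E)$, bound the energy of its normalization from above, and then read off the lower bound for the capacity from the definition $\capp(E)=\bigl(\inf_\nu I(\nu)\bigr)^{-1}$. Concretely: we may assume $\HH^s_\infty(E)>0$, since otherwise both sides of the asserted inequality vanish. Then Frostman's lemma yields $\mu\in M_+(\R^{n+1})$ with $\supp\mu\subset E$, $\mu(B(x,r))\le r^s$ for all $x$ and all $r>0$, and $\mu(E)\gtrsim_{s,n}\HH^s_\infty(E)$; applying the growth bound with $r=\diam(E)$ also gives $\mu(E)\le\diam(E)^s$.

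Next I would carry out the pointwise potential estimate. For $n>1$, writing $|t|^{1-n}=(n-1)\int_{|t|}^\infty r^{-n}\,dr$ and using Tonelli,
$$\int_E\frac{d\mu(y)}{|x-y|^{n-1}}=(n-1)\int_0^\infty r^{-n}\,\mu(B(x,r))\,dr\le(n-1)\int_0^\infty r^{-n}\min\{r^s,\mu(E)\}\,dr.$$
Splitting the last integral at $r=\mu(E)^{1/s}$, a direct computation bounds it by $\frac{s}{s-n+1}\,\mu(E)^{1-\frac{n-1}s}$, the convergence of the piece near $r=0$ being exactly the place where $s>n-1$ is used. For $n=1$ the same scheme works once the sign of $\log\frac1{|x-y|}$ is dealt with: the asserted inequality is invariant under dilations of $E$ (both $\capp_L(\cdot)$ and $\HH^s_\infty(\cdot)^{1/s}$ are homogeneous of degree $1$), so I would normalize $\diam(E)=1$; then $\log\frac1{|x-y|}=\int_{|x-y|}^1\frac{dr}r\ge0$ for $x,y\in E$, Tonelli gives $\int_E\log\frac1{|x-y|}\,d\mu(y)=\int_0^1 r^{-1}\mu(B(x,r))\,dr$, and splitting at $r=\mu(E)^{1/s}\le1$ yields $\int_E\log\frac1{|x-y|}\,d\mu(y)\le\frac{\mu(E)}s\bigl(1+\log\tfrac1{\mu(E)}\bigr)$.

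Finally, integrating the pointwise bound against $d\mu(x)$ and passing to the probability measure $\bar\mu=\mu/\mu(E)$: for $n>1$ one gets $I(\bar\mu)=\mu(E)^{-2}I(\mu)\lesssim_{s,n}\mu(E)^{-\frac{n-1}s}$, whence $\capp(E)\ge I(\bar\mu)^{-1}\gtrsim_{s,n}\mu(E)^{\frac{n-1}s}\gtrsim_{s,n}\HH^s_\infty(E)^{\frac{n-1}s}$; for $n=1$ one gets $I(\bar\mu)\le\frac1{2\pi s}\bigl(1+\log\tfrac1{\mu(E)}\bigr)$, hence $\capp_W(E)\ge\frac{2\pi s}{1+\log(1/\mu(E))}$ and therefore
$$\capp_L(E)=e^{-2\pi/\capp_W(E)}\ge e^{-\frac1s\left(1+\log\frac1{\mu(E)}\right)}=e^{-1/s}\,\mu(E)^{1/s}\gtrsim_s\HH^s_\infty(E)^{1/s}.$$

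I expect the only genuinely delicate points to be the two highlighted above: the convergence of the potential integral at $r=0$, which is precisely where the hypothesis $s>n-1$ enters (for $s\le n-1$ the integral diverges for every nontrivial $\mu$, consistent with $E$ being polar), and the handling of the logarithm in the planar case — both the sign issue, removed by the dilation normalization, and the cancellation of the $\log\frac1{\diam(E)}$ terms that makes the final estimate correctly scale-covariant. Everything else is routine bookkeeping.
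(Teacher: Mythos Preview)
Your proposal is correct and follows precisely the approach the paper indicates: the paper does not give a proof but simply states that the lemma ``is an immediate consequence of Frostman's Lemma'' and cites \cite{Mattila} and \cite{Cufi-Tolsa-Verdera} for the details. Your argument supplies exactly those details---Frostman measure, layer-cake bound on the potential, and the scale-normalization in the planar case to handle the sign of the logarithm---and all the computations check out.
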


This result is an immediate consequence of Frostman's Lemma. See \cite[Chapter 8]{Mattila} for the case $n>1$, and 
\cite[Lemma 4]{Cufi-Tolsa-Verdera} for the case $n=1$, for example.

\vv

Let $\Omega\subsetneq\R^{n+1}$ be open, and let $\xi\in\partial\Omega$ and $r_0>0$. We say that $\Omega$ satisfies the $(\xi,r_0)$-local capacity density 
condition if there exists some constant $c>0$ such that, for any $r\in(0,r_0)$,
$$\capp(B(\xi,r)\setminus\Omega) \geq c\,r^{n-1}\qquad \mbox{in the case $n>1$},$$
and
$$\capp_L(B(\xi,r)\setminus\Omega) \geq c\,r\qquad \mbox{in the case $n=1$}.$$
We say that $\Omega$ satisfies the capacity density condition (CDC) if it satisfies the $(\xi,\diam(\Omega))$-local capacity density condition uniformly for all $\xi\in\partial\Omega$. In particular, if $\partial\Omega$ is $s$-AD regular for some $s>n-1$, then $\Omega$ satisfies the CDC.

Remark that, by the previous lemmas, if $\Omega$ satisfies the CDC, for any ball $B$ centered in $\partial \Omega$, it holds
$$\omega^{x}(B)\gtrsim 1 \quad \mbox{ for all $x\in \tfrac14 B\cap \Omega$}.$$
The following result is also standard and well known. See \cite{Ancona}, for example.

\begin{lemma}\label{lemholder1}
Let $\Omega\subset  \R^{n+1}$, let $\xi\in \partial\Omega$, and let $r>0$. Suppose that
$\Omega$ satisfies the $(\xi,r_0)$-local capacity density condition. 
 Let $u$ be a nonnegative function which is continuous in
$\overline{B(\xi, r)\cap \Omega}$ and harmonic in $B(\xi, r)\cap \Omega$, and vanishes continuously on $B(\xi,r)\cap\partial\Omega$. Then there is $\alpha>0$ such that for all $r\in (0,r_0)$,
\begin{equation}
u(x)\lesssim  \left(\frac{|x-\xi|}{r}\right)^{\alpha}\sup_{B(\xi,r)\cap \Omega} u \;\;\quad\mbox{ for all } x\in \Omega\cap B(\xi,r).
\label{e:holder}
 \end{equation}
 Both $\alpha$ and the implicit constant above depend only on $n$ and the constant $c$ involved in the definition of the
  $(\xi,r_0)$-local capacity density condition.
\end{lemma}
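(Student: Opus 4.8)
The plan is to derive the Hölder bound \rf{e:holder} from a single scale-invariant decay estimate obtained via the maximum principle together with the nondegeneracy of harmonic measure under the capacity density condition. Fix $r\in(0,r_0)$ and set $M(\rho)=\sup_{B(\xi,\rho)\cap\Omega}u$ for $0<\rho\le r$; this is finite and nondecreasing in $\rho$, since $u$ is continuous on the compact set $\overline{B(\xi,r)\cap\Omega}$. I claim it suffices to prove
\begin{equation}\label{e:oscstep}
M(\rho/8)\le (1-c_1)\,M(\rho)\qquad\text{for all }0<\rho\le r,
\end{equation}
with $c_1\in(0,1)$ depending only on $n$ and the capacity density constant. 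Granting \rf{e:oscstep}, iterating gives $M(8^{-k}r)\le(1-c_1)^k M(r)$ for all integers $k\ge 0$; with $\alpha:=\dfrac{\log\frac1{1-c_1}}{\log 8}>0$ this reads $M(8^{-k}r)\le(8^{-k})^{\alpha}M(r)$, and for any $x\in\Omega\cap B(\xi,r)$ I would choose $k\ge0$ with $8^{-(k+1)}r\le|x-\xi|<8^{-k}r$ (or use $M(\rho)\le M(r)$ directly when $|x-\xi|\ge r/8$) to get $u(x)\le M(8^{-k}r)\le 8^{\alpha}\,(|x-\xi|/r)^{\alpha}\,M(r)$, which is exactly \rf{e:holder}.

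To prove \rf{e:oscstep}, fix $\rho\le r$ and put $D=B(\xi,\rho)\cap\Omega$, a bounded open set with $\xi\in\partial D$ and $\partial D\subset(\partial\Omega\cap\overline{B(\xi,\rho)})\cup(\partial B(\xi,\rho)\cap\overline\Omega)$. Since $u$ is harmonic in $D$, continuous on $\overline D$, vanishes on $\partial\Omega\cap B(\xi,\rho)$, and is $\le M(\rho)$ on $\partial B(\xi,\rho)\cap\overline\Omega$, the maximum principle yields
$$u(x)\le M(\rho)\,h(x)\quad\text{in }D,\qquad\text{where } h(x):=\omega^x_D\big(\partial B(\xi,\rho)\cap\overline\Omega\big),$$
so it is enough to show $h(x)\le 1-c_1$ for $x\in B(\xi,\rho/8)\cap\Omega$. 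Let $B$ be the closed ball of radius $\rho/2$ centered at $\xi$, so $\xi\in\partial D$ and $\tfrac14B\setminus D=B(\xi,\rho/8)\setminus\Omega$. Bourgain's estimate (Lemma \ref{lembourgain} for $n\ge2$, Lemma \ref{lemCL1} for $n=1$) applied to $D$ and $B$, combined with the $(\xi,r_0)$-local capacity density condition — which gives $\capp(B(\xi,\rho/8)\setminus\Omega)\gtrsim\rho^{n-1}$ (resp.\ $\capp_L(\cdot)\gtrsim\rho$), since $\rho/8<r_0$ — produces $\omega^x_D(B)\gtrsim 1$ for all $x\in\tfrac14B\cap D=B(\xi,\rho/8)\cap\Omega$. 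On the other hand, the only portion of $\partial D$ meeting $B$ is $\partial\Omega\cap\overline{B(\xi,\rho/2)}\subset\partial D\cap B(\xi,\rho)$, which lies in the complement of $\partial B(\xi,\rho)\cap\overline\Omega$ within $\partial D$, so $\omega^x_D(B)=\omega^x_D(\partial\Omega\cap\overline{B(\xi,\rho/2)})\le 1-h(x)$. Hence $h(x)\le 1-c_1$ there, and \rf{e:oscstep} follows.

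The step I expect to require the most care is the maximum-principle comparison $u\le M(\rho)h$ on the (generally very irregular) domain $D$: one should define $h$ through the harmonic measure $\omega^{\cdot}_D$ / Perron--Wiener--Brelot solution, and then verify that $w:=u-M(\rho)h$ is bounded and harmonic in $D$ with $\limsup_{x\to z}w(x)\le 0$ at every $z\in\partial D$, so that $w\le0$ in $D$; no regularity of $\partial D$ is needed because only an upper bound on $u$ is wanted. Everything else — the capacity lower bound coming from the CDC, the disjointness used to identify $\omega^x_D(B)$ with mass carried by $\partial\Omega$, and the final geometric iteration — is routine given the lemmas already established, so the argument is the classical oscillation proof of boundary Hölder continuity under the capacity density condition (cf.\ \cite{Ancona}).
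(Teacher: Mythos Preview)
Your argument is correct and is the standard oscillation/iteration proof of boundary H\"older continuity under a capacity density condition. The paper does not actually prove Lemma~\ref{lemholder1}: it merely states that the result is well known and refers to \cite{Ancona}. So there is no ``paper's own proof'' to compare with; you have supplied exactly the classical argument that the citation points to, using the paper's Lemmas~\ref{lembourgain} and~\ref{lemCL1} for the key nondegeneracy step $\omega_D^x(B)\gtrsim 1$.
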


\vv

\subsection{Corkscrews, connectivity conditions, and uniform domains}
All domains $\Omega$ considered in this paper are Wiener regular, that is, the Dirichlet problem for the Laplacian with data in $C(\partial\Omega)$ is solvable in $\Omega$. Remark that the CDC condition for $\Omega$ or the local CDC condition at each $\xi\in\partial\Omega$ imply the Wiener regularity of $\Omega$.

A domain $\Omega\subset\R^{n+1}$ is called uniform (or $C$-uniform) if for every $x,y\in\Omega$ there is a curve $\gamma\subset \R^{n+1}$ 
connecting $x$ and $y$ such that
\begin{itemize}
\item[(a)] $\HH^1(\gamma)\leq C\,|x-y|$, and
\item[(b)] for all $z\in\gamma$, $\dist(z,\partial\Omega)\geq C^{-1}\,\dist(z,\{x,y\})$.
\end{itemize}

We say that  $\Omega$ satisfies the corkscrew condition (or $c$-corkscrew condition) if there exists some constant $c > 0$ such that for all $\xi\in\Omega$ and all $0<r \leq\diam(\partial\Omega)$ there is a ball $B(x,cr)\subset B(\xi,r)\cap\Omega$. The point $x$ is called a ``corkscrew point'' relative to the ball $B(\xi,r)$.
Further, we say that $x$ is a ``John point'' relative to $B(\xi,r)$ if for every $y\in B(\xi,r)\cap\Omega$
there exists a curve $\gamma$ satisfying the properties (a), (b) above.
It is immediate to check that if $\Omega$ is a uniform domain, then it satisfies the corkscrew condition. Also, for these domains, any corkscrew point is a John point relative to the ball associated to the corkscrew.

We will need to use the following result below. This is proven exactly in the same way as the analogous result for NTA domains in
\cite[Lemma 4.4]{Jerison-Kenig}.

\begin{lemma}\label{lem***}
For $n\geq 1$, let $\Omega\subset\R^{n+1}$ be a domain and $\xi\in\partial\Omega$, $0<r\leq \diam(\partial \Omega)$ such that
the $(\xi,r)$-local CDC holds. Let $x\in\Omega\cap B(\xi,r)$ be a corkscrew John point relative to $B(\xi,r)$. 
Let $u$ be a nonnegative function which is continuous in
$\overline{B(\xi, r)\cap \Omega}$ and harmonic in $B(\xi, r)\cap \Omega$, and vanishes continuously on $B(\xi,r)\cap\partial\Omega$. Then 
we have
$$\sup_{B(\xi,r/2)\cap \Omega} u \lesssim u(x),$$
with the implicit constant depending on the local CDC, and the corkscrew and John properties of $x$.
\end{lemma}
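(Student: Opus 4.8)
The plan for Lemma \ref{lem*}.**

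The statement is a Harnack-chain-type comparison: the value of a positive harmonic function vanishing on $B(\xi,r)\cap\partial\Omega$ at the corkscrew John point $x$ controls its supremum over the smaller ball $B(\xi,r/2)\cap\Omega$. The plan is to follow the argument of Jerison--Kenig \cite[Lemma 4.4]{Jerison-Kenig} for NTA domains, replacing the nontangential accessibility hypotheses by the weaker ingredients available here, namely the local CDC (which gives the boundary H\"older decay of Lemma \ref{lemholder1}) and the John property of $x$ (which supplies Harnack chains to any point of $B(\xi,r)\cap\Omega$).

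First I would fix $y\in B(\xi,r/2)\cap\Omega$ at which we want to bound $u(y)$ and distinguish two regimes according to the distance of $y$ to the boundary. If $y$ is ``deep in the interior,'' say $\dist(y,\partial\Omega)\geq \eta r$ for a small dimensional $\eta$ to be chosen, then by the John property there is a curve $\gamma$ from $y$ to $x$ with $\HH^1(\gamma)\lesssim |x-y|\lesssim r$ and $\dist(z,\partial\Omega)\gtrsim \dist(z,\{x,y\})$ for all $z\in\gamma$; this produces a Harnack chain of balls of bounded overlap whose number is controlled by $\eta$ and the John constant, and iterating Harnack's inequality along it yields $u(y)\lesssim_\eta u(x)$. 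If instead $y$ is close to the boundary, $\dist(y,\partial\Omega)<\eta r$, pick $\xi_y\in\partial\Omega$ with $|y-\xi_y|=\dist(y,\partial\Omega)$; since $\xi_y\in B(\xi,2r/3)$, the $(\xi_y, r/4)$-local CDC holds (it is inherited from the $(\xi,r)$-CDC on that scale, or one argues directly on $B(\xi_y,cr)$), so Lemma \ref{lemholder1} applied on $B(\xi_y, r/4)$ gives
\[
u(y)\lesssim \Big(\frac{|y-\xi_y|}{r/4}\Big)^{\alpha}\sup_{B(\xi_y,r/4)\cap\Omega}u
\lesssim \eta^{\alpha}\,\sup_{B(\xi, 3r/4)\cap\Omega}u .
\]

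The two estimates are combined by a standard maximum-principle/stopping argument. Set $M=\sup_{B(\xi,3r/4)\cap\Omega}u$ and consider a point $y^*\in \overline{B(\xi,2r/3)\cap\Omega}$ nearly realizing $\sup_{B(\xi,2r/3)\cap\Omega}u$. Either $y^*$ is interior, in which case $u(y^*)\lesssim_\eta u(x)$ and we are done, or $y^*$ is within $\eta r$ of $\partial\Omega$ and the H\"older estimate gives $u(y^*)\lesssim \eta^\alpha M$. Repeating the dichotomy at a slightly larger scale (working with radii $3r/4$, $2r/3$, $r/2$, etc., nested so that the H\"older ball stays inside $B(\xi,r)$), one obtains an inequality of the form $\sup_{B(\xi,r/2)\cap\Omega}u \leq C\eta^{\alpha}\sup_{B(\xi,3r/4)\cap\Omega}u + C_\eta\,u(x)$; absorbing, then choosing $\eta$ small enough that $C\eta^\alpha$ is harmless (together with a covering of $B(\xi,r/2)\cap\Omega$ by finitely many such balls, or a straightforward iteration filling the annulus between $r/2$ and $r$), gives $\sup_{B(\xi,r/2)\cap\Omega}u\lesssim u(x)$ with constants depending only on $n$, the CDC constant, and the corkscrew and John constants of $x$.

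The routine parts are the Harnack-chain count in the interior regime and the bookkeeping of the nested radii; the main obstacle is the interface step, i.e.\ making the maximum-principle bootstrap rigorous with the correct ordering of the scales so that every application of Lemma \ref{lemholder1} is on a ball contained in the region where $u$ vanishes on the boundary, and so that the absorption of $C\eta^\alpha \sup u$ is legitimate (this requires the supremum on the left to be over a strictly smaller ball than the one on the right at each step, which is why one works through a finite chain of radii rather than a single pair). I would cite \cite[Lemma 4.4]{Jerison-Kenig} and Lemma \ref{lemholder1} for these pieces and keep the write-up short.
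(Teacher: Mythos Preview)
Your plan follows exactly the Jerison--Kenig argument that the paper invokes (the paper gives no proof beyond the pointer to \cite[Lemma 4.4]{Jerison-Kenig}), so the strategy is the intended one. Two points deserve care, though.

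First, the claim that the $(\xi_y,r/4)$-local CDC ``is inherited from the $(\xi,r)$-CDC'' is not correct as stated: the $(\xi,r)$-local CDC defined in Section~\ref{secnew} is a condition at the single point $\xi$ and says nothing about the capacity of balls centered at other boundary points $\xi_y$. The Jerison--Kenig iteration genuinely requires the H\"older decay of Lemma~\ref{lemholder1} at every boundary point $\xi_{y_k}$ that appears along the way, hence CDC there. In all the paper's applications of Lemma~\ref{lem***} the boundary is $s$-AD regular (or is $\wt I_0$), so the CDC holds uniformly at every boundary point in $B(\xi,r)$, and one should read the hypothesis in that spirit; but you should not assert the inheritance as a general fact.

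Second, the absorption inequality $\sup_{B(\xi,r/2)}u \leq C\eta^\alpha \sup_{B(\xi,3r/4)}u + C_\eta u(x)$ cannot be closed in a single step, since the supremum on the right is over a larger ball and is not a priori controlled by the one on the left. One must either iterate over a sequence of radii $r/2=r_0<r_1<\cdots\nearrow r$ with H\"older-ball radii $\rho_k\approx r-r_k$ and sum the resulting geometric series (the term $(C\eta^\alpha)^K\sup_{B(\xi,r)}u$ then tends to $0$ because $u$ is bounded on $\overline{B(\xi,r)\cap\Omega}$), or run the original Jerison--Kenig blow-up argument, producing points $y_k$ with $u(y_k)\geq 2^k u(y_0)$ that remain inside a fixed ball. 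Your parenthetical ``iteration filling the annulus between $r/2$ and $r$'' points in the right direction; just make the bookkeeping explicit.
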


In \cite[Lemma 2.2]{DFM} the following result has been proved.

\begin{lemma}
For $0<s<n$ and $C_0>1$, let $E$ be an $(s,C_0)$-AD regular set. Then the domain $\Omega=\R^{n+1}\setminus E$ is $C$-uniform, with $C$ depending
just on $n,s,C_0$.
\end{lemma}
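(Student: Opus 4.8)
The plan is to verify the two geometric ingredients that together imply uniformity in this setting: the corkscrew condition for $\Omega$, and a quantitative local connectivity estimate for $\Omega$ in balls centred on $E$; from these the uniformity curve is produced by a telescoping argument towards $E$. All constants below depend only on $n,s,C_0$, and $\lesssim$, $\approx$ are meant accordingly. The first thing I would record is a covering fact: given $\xi\in E$, $\rho>0$ and $\delta=c\rho$ with $c$ small, take a maximal $\delta$-separated subset $\{y_i\}_{i\le N}$ of $E\cap B(\xi,M\rho)$ for a fixed large $M$. The balls $B(y_i,\delta/2)$ are pairwise disjoint, each has $\HH^s(E\cap B(y_i,\delta/2))\gtrsim\delta^s$ by AD-regularity, and their total $\HH^s$-mass is $\lesssim\rho^s$; hence $N\lesssim c^{-s}$, and by maximality $E\cap B(\xi,M\rho)\subset\bigcup_{i\le N}B(y_i,\delta)$.

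For the corkscrew condition: the $\delta$-neighbourhood of $E$ inside $B(\xi,\rho)$ lies in $\bigcup_i B(y_i,2\delta)$, of Lebesgue measure $\lesssim N\delta^{n+1}\lesssim c^{\,n+1-s}\rho^{n+1}$, which since $s<n<n+1$ is $\ll\rho^{n+1}$ once $c$ is small. Covering $B(\xi,\rho)$ by $\approx(c')^{-(n+1)}$ balls of radius $c'\rho$ with $c'\approx c^{s/(n+1)}\ge c$, at most $\lesssim c^{-s}<(c')^{-(n+1)}$ of them can meet $\bigcup_i B(y_i,\delta)$, so one of them avoids $E$; its centre is a corkscrew point at scale $\rho$. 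Thus $\Omega$ satisfies the corkscrew condition, with corkscrew constant $\approx c^{s/(n+1)}$.

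The heart of the matter, and the step I expect to be the main obstacle, is the following local connectivity estimate: there are constants $c_1>c_2>0$ and $M\ge1$ such that any $p,q\in B(\xi,\rho)$ (with $\xi\in E$) satisfying $\dist(p,E)\ge c_1\rho$ and $\dist(q,E)\ge c_1\rho$ can be joined by a curve $\gamma\subset B(\xi,M\rho)$ with $\HH^1(\gamma)\le M\rho$ and $\dist(z,E)\ge c_2\rho$ for all $z\in\gamma$. To prove it I would apply the covering fact at scale $\rho$ over $E\cap B(\xi,M\rho)$, with $\delta=c\rho\le c_1\rho/10$, obtaining balls $D_i=B(y_i,\delta)$, $N\lesssim c^{-s}$, whose halves $\tfrac12 D_i$ are disjoint and with $E\cap B(\xi,M\rho)\subset\bigcup_i D_i$; note $p,q\notin\bigcup_i 2D_i$. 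Starting from the segment $[p,q]$ (length $\le2\rho$), at most $\lesssim\rho/\delta=c^{-1}$ of the obstacles $2D_i$ can meet it, since the disjoint half-balls $\tfrac12 D_i$ indexed by such $i$ all lie in a tube of radius $\lesssim\delta$ about $[p,q]$, of volume $\lesssim\rho\,\delta^{n}$. I would then replace each maximal sub-arc of $[p,q]$ hitting $\bigcup_i 2D_i$ by a detour running along $\partial(\tfrac52 D_i)$ around the relevant cluster of obstacles; since near any $D_i$ at most $\lesssim1$ of the $D_j$ can be present (disjoint half-balls, volume packing), each detour has length $\lesssim\delta$ and stays at distance $\ge\delta/5$ from every $D_j$. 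Summing over the $\lesssim c^{-1}$ detours adds $\lesssim c^{-1}\delta\lesssim\rho$ to the length, so $\HH^1(\gamma)\lesssim\rho$, while $\gamma$ stays well inside $B(\xi,M\rho)$ and $E\cap B(\xi,M\rho)\subset\bigcup_i D_i$, so $\dist(z,E)\ge\delta/5=:c_2\rho$ along $\gamma$. The length bound is the delicate point: although $N$ itself is only $\lesssim c^{-s}$ and hence unbounded as $c\to0$, at most $\lesssim c^{-1}$ obstacles ever lie in the way of a fixed segment, so the detours telescope to a bounded multiple of $\rho$, independently of how $c$ compares to $c_1$.

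Finally I would assemble the curve. Given $x,y\in\Omega$ with $r=|x-y|$ and, say, $\dist(x,E)\le\dist(y,E)$: if $r\le\tfrac12\dist(x,E)$ the segment $[x,y]$ lies in $\Omega$ and already works. Otherwise $\dist(x,E)<2r$; let $\xi_x,\xi_y\in E$ be nearest to $x,y$. Applying the corkscrew condition at the scales $2^k\dist(x,E)$, $k\ge0$, up to scale $\approx r$, and joining consecutive corkscrew points by the curves of the previous paragraph, I connect $x$ to a point $p_x\in B(\xi_x,Cr)$ with $\dist(p_x,E)\gtrsim r$ by a curve of length $\lesssim r$ (geometric series) along which $\dist(z,E)\gtrsim\dist(z,x)\ge\dist(z,\{x,y\})$; symmetrically I obtain $p_y$ from $y$ (or just take $p_y=y$ when $\dist(y,E)\gtrsim r$). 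Since then $|\xi_x-\xi_y|\lesssim r$ and $\dist(p_x,E),\dist(p_y,E)\gtrsim r$, the local connectivity estimate at scale $\approx r$ joins $p_x$ to $p_y$ by a curve of length $\lesssim r$ keeping distance $\gtrsim r$ from $E$. Concatenating the three pieces yields a curve from $x$ to $y$ of length $\lesssim|x-y|$ on which property (b) of the definition of uniform domain is checked piece by piece; in particular $\Omega$ is path-connected, hence a domain, and $C$-uniform with $C=C(n,s,C_0)$.
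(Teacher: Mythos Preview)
The paper does not prove this lemma at all; it simply quotes \cite[Lemma 2.2]{DFM}. Your overall architecture (corkscrew condition, then a quantitative local connectivity estimate near $E$, then a telescoping assembly of Harnack-type chains) is the standard route, and the corkscrew and final assembly steps are essentially correct.

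The genuine gap is in the local connectivity step, specifically in the detour construction. As written, that step never invokes the hypothesis $s<n$ (your corkscrew step only used $s<n+1$), yet the statement you are trying to prove there is \emph{false} for $s=n$: take $E$ to be an $n$-disk in $\R^{n+1}$, $\xi$ its center, and $p,q$ just above and below $\xi$; no curve in $B(\xi,M\rho)$ joins them while keeping distance $\gtrsim\rho$ from $E$. So any valid argument must use $s<n$ right here, and yours does not. Concretely, the assertion that the detour on $\partial(\tfrac52 D_i)$ ``stays at distance $\ge\delta/5$ from every $D_j$'' is unjustified: balls $D_j$ with $2\delta\le|y_i-y_j|\le\tfrac72\delta$ can sit across that sphere, and when $n=1$ the sphere is a circle which the $O(1)$ nearby caps may disconnect between the entry and exit points. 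Also, ``near any $D_i$ at most $\lesssim1$ of the $D_j$'' only bounds the local valence of the overlap graph; it does not prevent long chains (e.g.\ $E$ a line segment in $\R^3$), so ``each detour has length $\lesssim\delta$'' does not follow from it.

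The clean fix, which uses $s<n$ exactly once, is to replace the detour by a translation argument. With $p,q\in B(\xi,\rho)$ and $\dist(p,E),\dist(q,E)\ge c_1\rho$, let $H\cong\R^n$ be the hyperplane through $p$ orthogonal to $q-p$ and $\pi$ the orthogonal projection onto $H$. Then
\[
\HH^n\Big(\bigcup_i \pi(3D_i)\Big)\ \lesssim\ N\,\delta^n\ \lesssim\ c^{\,n-s}\rho^n\ \ll\ (c_1\rho)^n
\]
once $c=c(n,s,C_0)$ is small, so there is $v\in H$ with $|v|\le c_1\rho/2$ lying outside $\bigcup_i\pi(3D_i)$. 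The segment $[p+v,\,q+v]$ then misses every $3D_i$, and the polygonal path $p\to p+v\to q+v\to q$ stays in $B(\xi,C\rho)$, has length $\lesssim\rho$, and satisfies $\dist(\cdot,E)\ge\min(c_1\rho/2,\,2\delta)=:c_2\rho$ throughout. With this replacement your telescoping assembly goes through unchanged.
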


So the assumptions in Theorems \ref{teo1} or \ref{teo2} ensure that the domain $\Omega$ in those theorems is uniform and satisfies the CDC.
This is very useful because it implies some nice properties for the associated harmonic measure. For example, it implies that
given $p\in\R^{n+1}$ with $\dist(p,E)\gtrsim\diam(E)$, the doubling property $\omega^p(2B)\lesssim \omega^p(B)$ holds for any
ball $B$ centered in $\partial\Omega$. This is proven in \cite{Jerison-Kenig}
for NTA domains (i.e., for uniform domains $\Omega$ such that $\R^{n+1}\setminus \overline \Omega$ is a corkscrew domain), but the same proof works for uniform domains satisfying the CDC.

The following theorem states the so called ``change of pole formula" for uniform domains. This is proven in \cite{Jerison-Kenig}
for NTA domains, while the more general statement below is from \cite{MT}.

\begin{theorem}\label{teounif}
For $n\geq 1$, let $\Omega\subset\R^{n+1}$ be a Wiener regular uniform domain and let $B$ be a ball centered at $\partial\Omega$.
Let $p_1,p_2\in\Omega$ such that $\dist(p_i,B\cap \partial\Omega)\geq c_0^{-1}\,r(B)$ for $i=1,2$.
Then, for any Borel set $E\subset B\cap\partial\Omega$,
$$\frac{\omega^{p_1}(E)}{\omega^{p_1}(B)}\approx \frac{\omega^{p_2}(E)}{\omega^{p_2}(B)},$$
with the implicit constant depending only on $c_0$ and the uniform behavior of $\Omega$. 
\end{theorem}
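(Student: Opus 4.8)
The plan is to reduce the statement to two ingredients that hold in any Wiener regular uniform domain satisfying the CDC: the comparison principle (boundary Harnack in the weak form) and the doubling property of harmonic measure, both of which have been recalled in the paragraph preceding the theorem. First I would fix the ball $B=B(\xi,r)$ with $\xi\in\partial\Omega$, and note that since $\Omega$ is uniform it satisfies the corkscrew condition; let $x_B$ be a corkscrew (hence John) point relative to $B(\xi, r)$, so that $\dist(x_B,\partial\Omega)\approx r$ and $x_B\in\frac12 B$. The first reduction is to show it suffices to prove the estimate when one of the poles is this fixed corkscrew point $x_B$: indeed, if we know
$$\frac{\omega^{x_B}(E)}{\omega^{x_B}(B)}\approx \frac{\omega^{p}(E)}{\omega^{p}(B)}$$
for every admissible pole $p$ (with $\dist(p,B\cap\partial\Omega)\ge c_0^{-1}r$), then for $p_1,p_2$ we chain the two comparisons and the $\omega^{x_B}$-quotients cancel. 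So the whole content is the comparison with the corkscrew point.

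Next I would set up the two harmonic functions $u_1(y)=\omega^y(E)$ and $u_2(y)=\omega^y(B\cap\partial\Omega)$; both are nonnegative, harmonic in $\Omega$, and vanish continuously on $\partial\Omega\setminus \overline B$ (using $E\subset B\cap\partial\Omega$). The goal is to compare $u_1$ and $u_2$ at the pole $p$ versus at $x_B$. The standard argument runs as follows. Write $\Delta=\frac12 B$ and let $x_B$ be the corkscrew point for $B$. By Lemma \ref{lem***} applied in, say, $2B\cap\Omega$ (shrinking constants appropriately), any nonnegative harmonic function vanishing on $2B\cap\partial\Omega$ is controlled on $B\cap\Omega$ by its value at $x_B$; conversely Harnack's inequality along a Harnack chain (available because $\Omega$ is uniform) from $x_B$ to a point near any given $y\in\frac12 B\cap\Omega$ bounded away from $\partial\Omega$, together with the CDC and Lemma \ref{lemholder1} for points near the boundary, gives the lower bound. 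This yields, for $i=1,2$,
$$\frac{u_i(p)}{u_i(x_B)}\approx \omega^{x_B}(?)$$
— more precisely, the key classical fact (Jerison–Kenig, \cite[Lemma 4.8 or its analogue]{Jerison-Kenig}) is that for $E\subset B\cap\partial\Omega$ one has $\omega^p(E)\approx \omega^p(B)\,\omega^{x_B}(E)/\omega^{x_B}(B)$, whose proof is exactly the comparison of $u_1$ and $u_2$ via boundary Harnack on the ``back wall'' of a slightly larger ball $2B$.

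Concretely, I would: (i) cover the case $E$ contained in a proper sub-ball $B'=B(\eta,t)$ with $\eta\in\partial\Omega$, $2B'\subset B$, by applying the comparison principle to $u_1,u_2$ in $B(\eta,2t)\cap\Omega$ — there both vanish on the part of $\partial\Omega$ outside, so $u_1/u_2$ is comparable at any two interior points of $\frac12 B(\eta,2t)\cap\Omega$, in particular at $x_{B'}$; then push the pole from $x_{B'}$ to $p$ and to $x_B$ using Harnack chains and Lemma \ref{lem***} again, picking up the factors $\omega^p(B')$, $\omega^{x_B}(B')$; (ii) remove the restriction $2B'\subset B$ by the doubling property $\omega^p(2B)\lesssim\omega^p(B)$, which lets us replace $E\subset B\cap\partial\Omega$ by $E$ in a fixed fraction of $B$ at the cost of controlled constants, handling the ``collar'' $B\setminus\frac12 B$ by doubling; (iii) pass from sub-balls to arbitrary Borel $E\subset B\cap\partial\Omega$ by regularity of the measures $\omega^p$ and $\omega^{x_B}$ and a Vitali-type covering, since the comparison constants are uniform. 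I expect the main obstacle to be the bookkeeping in steps (ii)–(iii): making sure that the comparison constant for general $E$ does not degenerate as $E$ approaches $\partial B$, which is precisely where the doubling property (itself a consequence of uniformity plus CDC, as noted) must be invoked carefully, and where one must be slightly careful that the corkscrew point $x_B$ is simultaneously a valid ``far pole'' for all the sub-balls entering the Vitali cover. Everything else is a routine transcription of the NTA arguments from \cite{Jerison-Kenig}, replacing the exterior corkscrew hypothesis of NTA domains by the CDC, which is exactly what Lemmas \ref{lemholder1} and \ref{lem***} are designed to supply.
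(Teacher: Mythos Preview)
The paper does not prove Theorem~\ref{teounif}; it is stated as a known result, with the NTA case attributed to \cite{Jerison-Kenig} and the extension to Wiener regular uniform domains attributed to \cite{MT}. So there is no ``paper's own proof'' to compare against.

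That said, your sketch is the standard Jerison--Kenig strategy, and it is the right one: reduce to comparison with a fixed corkscrew point $x_B$, compare the two harmonic functions $y\mapsto\omega^y(E)$ and $y\mapsto\omega^y(B)$ via the boundary Harnack principle (which in this setting follows from CDC-based H\"older decay, Lemma~\ref{lemholder1}, plus the Carleson-type estimate, Lemma~\ref{lem***}), and use doubling to handle the collar near $\partial B$. One point worth tightening: in your step~(iii) the passage from sub-balls to arbitrary Borel $E$ does not actually require a Vitali covering argument. Once the boundary Harnack inequality is established in the form $u_1(y)/u_2(y)\approx u_1(x_B)/u_2(x_B)$ for \emph{all} $y$ in the relevant region (say $\Omega\setminus 2B$, after using doubling to enlarge), the comparison $\omega^{p}(E)/\omega^{p}(B)\approx\omega^{x_B}(E)/\omega^{x_B}(B)$ follows for \emph{every} Borel $E\subset B\cap\partial\Omega$ directly, since $E$ enters only through the values of $u_1=\omega^{\,\cdot}(E)$ at the two poles. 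The Vitali step is unnecessary and, as you yourself flag, is where constants could in principle degenerate; avoiding it removes that worry entirely.
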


\vv

\subsection{The Kelvin transform}

Given $x\in\R^{n+1}\setminus\{0\}$, we let 
$x^*= \frac1{|x|^2}\,x.$ 
Notice that the map defined by $T(x)=x^*$ is an involution of $\R^{n+1}\cup\{\infty\}$, understanding that
$T(0)=\infty$.
For $r>0$, we denote $r^*= \frac{1}{r}$.
Given an unbounded open set $\Omega\subset\R^{n+1}$ such that $\partial \Omega$ is compact and $0\not\in \partial\Omega$, we set
$$\Omega^* =
\big\{x^*:x\in\Omega\big\}\cup \{0\}$$ 
(so identifying $\Omega$ with $\Omega\cup\{\infty\}$, we have $\Omega^*=T(\Omega)$).
Given a function $u:\R^{n+1}\supset \overline\Omega\to \R$, vanishing at $\infty$ in the case $n>1$, its Kelvin transform is defined
by
$$ u^*(x^*)= \frac{1}{|x^*|^{n-1}}\,u(x),$$
understanding that $u(\infty)=0$ in the case $n>1$.
It is well known that $\Delta u=0$ in $\Omega$ if and only if $\Delta (u^*)=0$ in $\Omega^*$. Further, $(u^*)^*=u$.  See \cite[Chapter 4]{ABR}, for example. 
 Hence, if we take $u(x) = \omega_\Omega^x(F)$ for $F\subset \partial\Omega$, it follows that $u^*$ is a harmonic function in $\Omega^*$, vanishing in $\partial\Omega^*\setminus F^*$, and comparable to $1$ in $F^*$, with the implicit constant depending on $\dist(0,\partial\Omega)$
 and $\diam(\partial\Omega)$. Hence, for $x\in\Omega$, we have
 \begin{equation}\label{eqkelvin}
 \omega_{\Omega^*}^{x^*}(F^*)
  \approx u^*(x^*) = \frac1{|x^*|^{n-1}}\,u(x) \approx \omega_\Omega^x(F),
\end{equation}
with the first implicit constant depending on $\dist(0,\partial\Omega),\diam(\partial\Omega)$, and the second one on $|x^*|.$


\vv

\subsection{The Main Lemma}

To prove Theorems \ref{teo1} and \ref{teo2} we will use the following result, first implicitly used by
 Bourgain in \cite{Bourgain}, later by Batakis \cite{Batakis}, and more recently by Azzam \cite{Azzam-drop}.
 
\begin{lemma}\label{lem-dim} 
For $n\geq 1$, $s>0$, $C_0>1$, there exists an $M=M(n,s,C_0)>1$ (sufficiently big)
 such that the following holds. Let $E\subset \R^{n+1}$ be an $(s,C_0)$-AD regular set.
Let $\nu$ be a measure supported on
$E$ and $c_1\in (0,1)$ such that, for all $x\in E$, $0<r\leq \diam(E)$, there exists a ball $B(y,\rho)$ with $y\in B(x,r)\cap E$, $c_1\,r\leq \rho\leq r$, satisfying either
\begin{equation}\label{eq1--}
\frac{\nu(B(y,\rho))}{\rho^s} \geq M \,\frac{\nu(B(x,r))}{r^s}\qquad \mbox{or} \qquad \frac{\nu(B(y,\rho))}{\rho^s} \leq M^{-1} \,\frac{\nu(B(x,r))}{r^s}.
\end{equation}
Then $\dim \nu <s$.
\end{lemma}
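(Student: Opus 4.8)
The strategy is to show that the hypothesis forces the density $a(B) := \nu(B)/r(B)^s$ to oscillate multiplicatively as one zooms in, and that this oscillation (on average, along a stopping-time structure) makes the density blow up on a large portion of $E$, which gives a Frostman-type lower density bound $\nu(B(x,r)) \gtrsim r^{s-\kappa}$ on a set of positive $\nu$-measure, and hence $\dim\nu < s$. Concretely, fix a dyadic (David--Mattila or simply a net of balls) lattice adapted to $E$, and for a cube/ball $Q$ write $a(Q) = \nu(Q)/\ell(Q)^s$. The hypothesis \eqref{eq1--} says: for every $Q$ there is a ``child-scale'' ball $B(y,\rho)$, with $y\in Q\cap E$ and $\rho \approx \ell(Q)$, on which $a$ has jumped by a factor $\geq M$ or $\leq M^{-1}$ compared to $a(Q)$. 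Replacing $M$ by a fixed large constant and using $(s,C_0)$-AD regularity to compare $\nu(B(y,\rho))$ with $\nu$ of a bounded number of lattice cubes of size comparable to $\ell(Q)$ contained near $y$, I get: for every $Q$ there is a descendant $Q'$ with $\ell(Q')\approx \ell(Q)$ (at most a fixed number $N_0$ of generations below $Q$) such that $a(Q') \geq M_0\,a(Q)$ or $a(Q') \leq M_0^{-1}\,a(Q)$, for some large absolute $M_0$.

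\textbf{Key steps.} (1) \emph{Iterate the dichotomy along a tree.} Starting from a fixed top cube $Q_0$ (say with $a(Q_0)\approx 1$, which holds for $Q_0$ of size $\approx\diam E$ by AD-regularity), build a stopping-time tree: from each cube follow the ``jump child'' $Q'$ provided by the hypothesis. Along any branch of this tree the quantity $\log a$ performs a walk with steps $\pm \log M_0$ (in fact steps of absolute value $\geq \log M_0$), every $\leq N_0$ generations. (2) \emph{Count descending branches.} Because $\nu$ is a positive measure and a cube has boundedly many children, a Borel--Cantelli / martingale argument shows that for $\nu$-a.e.\ point $x$, the density $a$ along the cubes $Q\ni x$ cannot converge and in fact returns above any fixed threshold infinitely often \emph{with} the jumps — the core point is that $a$ cannot stay bounded while jumping by definite multiplicative amounts infinitely often unless it also jumps \emph{up} infinitely often by comparable amounts (a downward jump by $M_0^{-1}$ must later be compensated by upward jumps if the walk is to stay bounded below — and it must stay bounded below on a positive-measure set, since $\sum$ over children of $\nu(Q') = \nu(Q)$ forbids the density from vanishing everywhere). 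Made quantitative: there is $\theta>0$ and, for each $j$, a subcollection of generation-$\approx jN_0$ cubes $Q$ with $a(Q)\geq M_0^{\theta j}$ carrying a fixed fraction of $\nu(Q_0)$; equivalently, on a set $G$ with $\nu(G)>0$, for infinitely many scales $r$ one has $\nu(B(x,r))\geq r^{s}\,(r/\ell(Q_0))^{-\kappa} = c\, r^{s-\kappa}$ with $\kappa = \theta\log M_0/\log(\text{scale ratio})>0$. (3) \emph{Conclude via Frostman.} By the mass distribution principle (the easy half), the existence of $G$ with $\nu(G)>0$ and $\limsup_{r\to0}\nu(B(x,r))/r^{s-\kappa} >0$ for $x\in G$ implies $\HH^{s-\kappa}(A)=0$ for every $A$ with $\nu(A^c)=0$ forces $\nu(A\cap G)=0$, a contradiction; hence $\dim\nu\leq s-\kappa < s$.

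\textbf{Main obstacle.} The delicate point is step (2): turning the ``jump every $N_0$ generations'' property into a genuine \emph{upward} blow-up of the density on a set of positive $\nu$-measure. A pure random-walk heuristic would suggest the density could drift to $0$ along $\nu$-typical branches; what rescues us is the conservation of mass $\nu(Q) = \sum_{Q'\text{ child of }Q}\nu(Q')$, which means the density, weighted by $\nu$, cannot decay on average — so the downward jumps on most of the mass must be offset by upward jumps somewhere carrying a definite share of the mass, and iterating this balances into a genuine exponential growth of $a$ on a mass-fixed fraction. Formalizing this ``the $\nu$-average of $a$ does not decrease, but $a$ jumps by definite factors, hence it must grow on a fixed fraction of the mass'' via an $L^1(\nu)$-martingale / stopping-time decomposition (stop when the density has grown by a target factor; show the stopped cubes catch a fixed proportion of $\nu$) is where the real work lies; the AD-regularity is used throughout to pass between $\nu$-balls and lattice cubes and to guarantee the starting density $a(Q_0)\approx1$ and that $\nu$ cannot be supported on a single descending chain.
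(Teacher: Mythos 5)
The paper does not give its own proof of this lemma; it cites Section 5 of Azzam's paper \cite{Azzam-drop}. Your high-level framework (iterate the dichotomy along a tree, deduce a dimension drop, close with a Frostman/covering principle) matches the spirit of that argument, but your Step (2) — the actual mechanism that converts the jump hypothesis into a quantitative dimension drop — contains a genuine gap, and Step (3) as written does not quite close either.

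The problem with Step (2): the hypothesis hands you, at each scale $r$ and point $x$, \emph{one} ball $B(y,\rho)$ where the density $a(\cdot)=\nu(\cdot)/r(\cdot)^s$ has jumped. It says nothing about the distribution of $\nu$-mass among the children at that scale. In the ``down'' case $a(B(y,\rho))\leq M^{-1}a(B(x,r))$, one has $\nu(B(y,\rho))\leq M^{-1}(\rho/r)^s\nu(B(x,r))\leq M^{-1}\nu(B(x,r))$, so the jump ball carries a \emph{negligible} fraction of the mass. In the ``up'' case, $\nu(B(y,\rho))\geq Mc_1^s\,\nu(B(x,r))$, and since $M$ is fixed in advance while $c_1$ comes from the application (in the Main Lemma $c_1$ ends up depending on $M$, with $Mc_1^s\to 0$ as $M\to\infty$), the jump ball is again $\nu$-negligible in the natural regime. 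Meanwhile, the ``conservation of mass'' fact you invoke — that the $\nu$-weighted average of $a$ over a bounded cover at the next scale is $\gtrsim a(Q)$ — is true for \emph{every} measure on an AD-regular set, by Cauchy--Schwarz, whether or not the jump hypothesis holds. It therefore cannot by itself produce the strict growth you need; the jump never enters the average because it occurs on a mass-negligible piece. Your stated rescue (``downward jumps on most of the mass must be offset by upward jumps on a definite share'') presupposes that the downward jump sees a definite share of the mass, which is exactly what fails. The mechanism that actually works (this is essentially what Azzam does) is a Hausdorff-content bookkeeping argument: in the down case one \emph{deletes} the jump ball from a cover at scale $\approx c_1 r$; this removes a definite fraction (depending only on $c_1$ and $C_0$) of the $\HH^s$-content of the cover while sacrificing at most an $M^{-1}$-fraction of the $\nu$-mass; in the up case one descends into the jump ball. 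Iterating yields, for arbitrarily large $N$, a family of balls of roughly scale $c_1^N r_0$ whose total $\HH^s$-content is $\leq\sigma^N r_0^s$ with $\sigma<1$ but which still carries a fixed $\nu$-fraction of $B_0$; this is the exponential content savings that gives $\kappa>0$. Your blow-up conclusion is consistent with this, but you never actually derive it.

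For Step (3): showing $\limsup_{r\to0}\nu(B(x,r))/r^{s-\kappa}>0$ on a set $G$ with $\nu(G)>0$ gives, via a Vitali/$5r$-covering argument, $\HH^{s-\kappa}(G)<\infty$, hence $\dim(\nu|_G)\leq s-\kappa$; but it does \emph{not} give $\dim\nu\leq s-\kappa$, because $\nu(G^c)$ may be positive. You need the blow-up to hold $\nu$-a.e., which requires running the iteration inside every ball (the hypothesis is scale- and location-invariant, so this is available) and then exhausting $\supp\nu$ by such sets. This is a standard repair, but it is not automatic from what you wrote.
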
 
 
Although this result is not stated as above in \cite{Azzam-drop}, the detailed arguments of the proof are contained in Section 5 of that paper.

Given $\Omega$ and $E$ as in Theorems \ref{teo1} and \ref{teo2}, 
we denote by $p_0$ the pole for harmonic measure and we assume that this is far away from $E$ (in the plane we could take $p_0=\infty$), and we denote $\omega=\omega^{p_0}$. Recall that $\Omega=E^c$ is a uniform domain satisfying the CDC.
Thanks to Lemma \ref{lem-dim}, the fact that $\dim \omega < s$ is an immediate consequence of the following result.

\begin{mlemma}\label{mainlemma}
For $s>0$, $C_0>1$, let $E\subset \R^{n+1}$ be an $(s,C_0)$-AD regular closed set contained in a hyperplane. Let $M\geq 1$ and suppose either that $s\in[n-\frac12,n)$ or $s\in(n-\frac12-\ve,n-\frac12)$ for some $\ve= \ve(n,s,C_0,M)>0$ small enough. For any $x\in E$, $0<r\leq \diam(E)$, and $p_0\in \R^{n+1}\setminus (E\cup B(x,2r))$, there exists a ball $B(y,\rho)$ with $y\in B(x,r)\cap E$, $c\,r\leq \rho\leq r$, with
$c>0$ depending just on $n,s,C_0,M$, satisfying either
\begin{equation}\label{eq1}
\frac{\omega^{p_0}(B(y,\rho))}{\rho^s} \geq M \,\frac{\omega^{p_0}(B(x,r))}{r^s}\qquad \mbox{or} \qquad \frac{\omega^{p_0}(B(y,\rho))}{\rho^s} \leq M^{-1} \,\frac{\omega^{p_0}(B(x,r))}{r^s}.
\end{equation}
\end{mlemma}

To prove the Main Lemma, first we will show that, given $E$, $B(x,r)$, and $p_0$ as above, there exists $y\in B(x,r)\cap E$ such that
\begin{equation}\label{eq2}
\frac{\omega^{p_0}(B(y,\rho))}{\omega^{p_0}(B(x,r))} \geq c(s,C_0)\, \left(\frac{\rho}{r}\right)^{n-\frac12}
\end{equation}
for all $\rho\in (0,c'\, r)$, for some $c'\in(0,1)$ depending only on $s$, $C_0$, and $n$.
Clearly, this yields \rf{eq1} for $s\in (n-\frac12,n)$ and $\rho$ small enough. For the cases $s=n-\frac12$ and $s\in(n-\frac12-\ve,n-\frac12)$ we will need more careful estimates.
Notice also that the estimate \rf{eq2} is independent of the pole $p_0$, modulo a constant factor (as soon as $p_0\in \R^{n+1}\setminus (E\cup B(x,2r))$), because $\R^{n+1}\setminus E$ is a uniform domain. 

Sections \ref{sec3} and \ref{sec4} below are devoted to the proof of Main Lemma \ref{mainlemma}.
\vv

\section{The arguments for the planar case $n=1$ with $s\in [\frac12,1)$}\label{sec3}

\subsection{Proof of \rf{eq2}}\label{sec2.1}
Let $E\subset\R^2$ be as in the Main Lemma \ref{mainlemma}.
Without loss of generality, we assume that $E\subset\R\equiv \R\times\{0\}$. 
Let $x\in E$ and $0<r\leq \diam E$.
Taking into account that $s<1$, by a pigeon-hole argument, there is
an open interval $I=(a,b)\subset [x-r,x]$ which does not intersect $E$ and satisfies $\ell:=\HH^1(I)\approx_s r$. By enlarging $I$ if necessary, we can assume that $b\in E$. Notice that $b$ is contained in $[x-(1-c)r,x]$ because $x\in E$, for some $c>0$ depending on $s$.

We choose $y=b$. Again by the $s$-AD regularity of $E$ and the pigeon-hole principle, there exist radii $r_1,r_2$ with 
$\ell/2\leq r_1< r_2 \leq \ell$, $r_2-r_1\approx_s \ell\approx r$ such that
$$A(y,r_1,r_2)\cap E=\varnothing.$$
Here $A(y,r_1,r_2)$ stands for the open annulus centered in $x$ with inner radius $r_1$ and outer radius $r_2$.
Observe that the left component of $A(y,r_1,r_2)\cap\R$ is contained in $I$.

Next we apply a ``localization argument". We denote $E_1 = E\cap \bar B(y,r_1)$, $\Omega_1= E_1^c$, $r'=(r_1+r_2)/2$.
It is immediate to check that $E_1$ is still $s$-AD regular and thus $\Omega_1$ is a uniform domain too.
We claim that for any subset $F\subset E_1$ and any $p\in \partial B(y,r')$,
\begin{equation}\label{eq3}
\omega_{1}^p(F)\approx_s \omega^p(F),
\end{equation}
where $\omega_1$ stands for the harmonic measure for $\Omega_1$. 
To prove the claim, consider first $p\in \partial B(y,r')$ such that
$$\omega_1^p(F) = \max_{q\in \partial B(y,r')}\omega_1^q(F).$$
Using that $\omega^z_1(F)$ is harmonic in $\Omega$ and vanishes in $E_1\setminus F$ and the maximum principle, we get
\begin{align*}
\omega_1^p(F) &= \int_E \omega_1^z(F)\,d\omega^p(z) = \omega^p(F) +  \int_{E\setminus E_1} \omega_1^z(F)\,d\omega^p(z)\leq \omega^p(F) + \sup_{z\in E\setminus E_1}\omega_1^z(F)\,\,\omega^p(E\setminus E_1).
\end{align*}
Observe that, by Lemma \ref{lembourgain}, Lemma \ref{lemcap-cont},
the CDC, and a Harnack chain argument,
$\omega^p(E_1)\geq \delta_0,$
for some $\delta_0>0$ depending just on $s$. 
Hence, $\omega^p(E\setminus E_1)\leq 1-\delta_0$. 
Also, since $\omega_1^z(F)$ is harmonic in $\C_\infty\setminus B(y,r')$ and $E\setminus E_1\subset\C_\infty\setminus B(y,r')$, by the maximum principle we have
$$\sup_{z\in E\setminus E_1}\omega_1^z(F) \leq \max_{q\in \partial B(y,r')}\omega_1^q(F) = \omega_1^p(F).$$
Therefore,
$$\omega_1^p(F)\leq \omega^p(F) + \omega_1^p(F)\,(1-\delta_0),$$
or equivalently,
$\omega_1^p(F) \leq \delta_0^{-1}\,\omega^p(F).$
By the definition of $r'$ and Harnack's inequality, we infer
$$\omega_1^p(F) \lesssim \omega^p(F)$$
for {\em all} $p\in \partial B(y,r')$. On the other hand, by the maximum principle, we have trivially that
$\omega_1^p(F) \geq \omega^p(F)$, which concludes the proof of the claimed estimate \rf{eq3}.

Next we will perform another modification of the domain $\Omega_1$. 
For a fixed $\rho\in (0,r_1/4)$, consider the intervals $J=[y,y+\rho/2]$, $J'=[y,y+\rho]$ and define 
$E_2=E_1\cup J$ and
$\Omega_2=E_2^c =\Omega_1\setminus J$. 
By the CDC and the uniformity of $\Omega_1$, we infer that, for all $q\in \partial B(y,\rho/2)$,
$$\omega_1^q(J'\cap E_1)\gtrsim 1\geq \omega_2^q(J).$$
We also have $\omega_1^q(J'\cap E_1)\geq\omega_2^q(J)=0$ for all $q\in J^c\cap E_1$. Then, by the maximum principle, since both $\omega_1^z(J'\cap E_1)$ and $\omega_2^z(J)$ are harmonic in $\Omega_1\setminus \bar B(y,\rho/2)=\Omega_2\setminus \bar B(y,\rho/2)$
we deduce that
$$\omega_1^q(J'\cap E_1)\gtrsim \omega_2^q(J)$$
for all $q\in \Omega_2\setminus \bar B(y,\rho/2)$, and in particular for all $p\in \partial B(y,r')$. 

Finally we let $E_3 = [y,y+r_1]$ and $\Omega_3=E_3^c$, so that $E_2\subset E_3$. By the maximum principle, we have
$$\omega_2^p(J)\geq \omega_3^p(J)$$
for all $p\in \partial B(y,r')$. Hence, gathering the above estimates, we infer that, for all $p\in \partial B(y,r')$,
\begin{equation}\label{eq5}
\omega^p(J'\cap E)\approx_s \omega_1^p(J'\cap E)\gtrsim \omega_2^p(J) \geq \omega_3^p(J).
\end{equation}

Now it just remains to estimate $\omega_3^p(J)$. We can do this by means of a conformal transformation. Indeed, observe first that, by a
Harnack chain argument and the maximum principle, $\omega_3^p(J)\approx \omega_3^\infty(J)$ for all $p\in \partial B(y,r')$.
Next, suppose for simplicity that $y=-r_1/2$, so that $E_3= [-r_1/2,r_1/2]$.
The map $f:\bar B(0,1) \to \overline{\Omega_3}$ defined by
\begin{equation}\label{eq6}
f(z) = \Big(z + \frac1z\Big)\frac{r_1}4
\end{equation}
is a conformal transformation from $B(0,1)$ to $\Omega_3$ such that $f(0)=\infty$, with $f(\partial B(0,1)) = \partial\Omega_3=E_3$.
Thus,
$$\omega_3^\infty(J)= \frac1{2\pi}\,\HH^1(f^{-1}(J)).$$
An easy computation shows that
$$f^{-1}(J)= \{e^{i\alpha}: \pi-\theta\leq \alpha\leq\pi+\theta\},$$
with 
\begin{equation}\label{eqtheta}
\theta= \arccos \bigg(1-\frac{2\HH^1(J)}{r_1}\bigg) = \arccos \Big(1-\frac\rho{r_1}\Big)\approx \Big(\frac\rho{r_1}\Big)^{1/2}.
\end{equation}
Thus,
\begin{equation}\label{eqtheta2}
\omega_3^\infty(J)= \frac \theta{\pi} \approx \Big(\frac\rho{r_1}\Big)^{1/2}\approx \Big(\frac\rho{r}\Big)^{1/2}.
\end{equation}
Consequently, by the change of pole formula for uniform CDC domains and \rf{eq5}, we deduce that, for $p\in \partial B(y,r')$, 
\begin{equation}\label{eqcanvipol}
\frac{\omega^{p_0}(B(y,\rho))}{\omega^{p_0}(B(x,r))} \approx \omega^{p}(\bar B(y,\rho)) 
 = \omega^p(J'\cap E) \gtrsim \omega_3^p(J) \approx \Big(\frac\rho{r}\Big)^{1/2},
 \end{equation}
which completes the proof of \rf{eq2}.

\vv

\subsection{The case $s=1/2$} \label{sec2.2}
In this case the inequality \rf{eq2} does not suffice to prove \rf{eq1} and we need a better estimate.
We consider the preceding domains $\Omega_1,\Omega_2,\Omega_3$, so that,  for all 
$p\in \partial B(y,r')$,  \rf{eq5} holds. However, the estimate $\omega_2^p(J) \geq \omega_3^p(J)$ is too coarse for our purposes.
Instead, we write
$$\omega_2^p(J) \approx \omega_2^\infty(J)= \int_{E_3} \omega_2^z(J)\,d\omega_3^\infty(z).$$
The density $\frac{d\omega_3^\infty}{d\HH^1|_{E_3}}$ can be computed explicitly by means of the conformal transformation in \rf{eq6}.
Using the identity $\omega_3^\infty(J)=\pi^{-1}\arccos \left(1-\frac{2\HH^1(J)}{r_1}\right)$ and differentiating,
it follows that
$$ \frac{d\omega_3^\infty}{d\HH^1|_{E_3}}(t) = \frac1{\pi\sqrt{(\frac{r_1}2-t)(\frac{r_1}2+t})}.$$
Thus,
\begin{equation}\label{eq07}
\frac{d\omega_3^\infty}{d\HH^1|_{E_3}}(t)\approx \frac1{\sqrt{r_1(t+\frac{r_1}2})}\quad \mbox{ for $t\in[-r_1/2,0]$,}
\end{equation}
and so
\begin{equation}\label{eq7}
\omega_2^p(J) \gtrsim  \int_{\frac{-r_1}2}^0 \omega_2^t(J)\,\frac{dt}{\sqrt{r_1(t+\frac{r_1}2})}
\end{equation}
(recall that we are identifying $\R\equiv \R\times\{0\})$.

To estimate the integral in \rf{eq7} from below, consider the annuli $A_k= A(y,2^k\rho,2^{k+1}\rho)$ for $k\geq 1$, and let $N = [\log_2 \frac {r_1}\rho]$.
By the $s$-AD regularity of $E$ and pigeonholing, for every $k\in[1,N]$ there exists an interval $I_k\subset A_k\cap E_3$ (recall $E_3$ is an interval) such that $\HH^1(I_k)\approx 2^k\rho$ and $I_k\cap E= I_k\cap E_2=\varnothing$. Let $\hat I_k$ be another interval concentric with $I_k$ and half length. Then we write
\begin{equation}\label{eq7.5}
\int_{\frac{-r_1}2}^0 \omega_2^t(J)\,\frac{dt}{\sqrt{r_1(t+\frac{r_1}2})} \geq \sum_{k=1}^N 
\int_{\hat I_k} \omega_2^t(J)\,\frac{dt}{\sqrt{r_1(t+\frac{r_1}2})}.
\end{equation}

We claim that 
\begin{equation}\label{eq8}
\omega_2^t(J)\gtrsim \bigg(\frac\rho{|t-y|}\bigg)^{1/2}\quad \mbox{ for all $t\in \bigcup_{k=1}^N \hat I_k$.} 
\end{equation}
Assuming this for the moment, we obtain
\begin{align*}
\omega_2^p(J) & \gtrsim \sum_{k=1}^N 
\int_{\hat I_k} \bigg(\frac\rho{|t-y|}\bigg)^{1/2}\,\frac{dt}{\sqrt{r_1(t+\frac{r_1}2})} \\
& \approx
\sum_{k=1}^N 
\int_{\hat I_k} \bigg(\frac\rho{\HH^1(\hat I_k)}\bigg)^{1/2}\,\frac{dt}{r_1^{1/2}\HH^1(\hat I_k)^{1/2}} = N\,\Big(\frac\rho{r_1}\Big)^{1/2} \approx \log\frac r\rho\,\Big(\frac\rho{r}\Big)^{1/2}.
\end{align*}
By \rf{eq5} and the change of pole formula, arguing as in the preceding subsection, we obtain
\begin{equation}\label{eq11}
\frac{\omega^{p_0}( B(y,\rho))}{\omega^{p_0}(B(x,r))} \approx \omega^{p}(\bar B(y,\rho)) 
 = \omega^p(J'\cap E) \gtrsim \omega_2^p(J) \approx \log\frac r\rho\,\Big(\frac\rho{r}\Big)^{1/2},
 \end{equation}
which implies \rf{eq1} for $\rho$ small enough.

It remains to prove \rf{eq8}. To this end, for each $t\in\hat I_k$,
let $t'\in \R$ be the point symmetric to $t$ with respect to $y$. That is, $t'=-r_1 - t$. Notice that $t'$ is on the left side of the interval $E_3$ (recall that the leftmost point of $E_3$ is $y=-r_1/2$). By a Harnack chain argument and the maximum principle, we have
$$\omega^t_2(J)\approx \omega_2^{t'}(J) \geq \omega_3^{t'}(J).$$
Now we can compute explicitly $\omega_3^{t'}(J)$ by means of the conformal transformation
in \rf{eq6}. Indeed, consider the change of variable $t'=y - \frac{r_1}2\,h$. Then, it follows easily that
$$f^{-1}(t') = \frac{-1}{1+h+\sqrt{h(2+h)}} = -(1+h) +\sqrt{h(2+h)}.$$
So $f^{-1}(t')$ is a point in the unit disk belonging to the segment $(-1,0)$ such that 
$$|-1-f^{-1}(t')| = -h +\sqrt{h(2+h)} \approx h^{1/2} = \bigg(\frac{2|t'-y|}{r_1}\bigg)^{1/2}
.$$
Recall that $f^{-1}(J)= [\pi-\theta,\pi+\theta]$, with $\theta\approx\big(\frac\rho{r_1}\big)^{1/2}$, by \rf{eqtheta}. Hence,
$|-1-f^{-1}(t')|\gtrsim \HH^1(f^{-1}(J))$.
Taking into account that, for any point $q\in B(0,1)$ and $\eta := 10\,\dist(q,\partial B(0,1))$, $\omega_{B(0,1)}^q|_{B(q,\eta)}$ is comparable to 
$\eta^{-1}\HH^1|_{\partial B(0,1)\cap B(q,\eta)}$, we deduce that
$$\omega_3^{t'}(J) = \omega_{B(0,1)}^{f^{-1}(t')}(f^{-1}(J)) \approx \frac{\theta}{|-1-f^{-1}(t')|}\approx 
 \frac{\big(\frac\rho{r_1}\big)^{1/2}}{\Big(\frac{2|t'-y|}{r_1}\Big)^{1/2}} \approx  \frac{\rho^{1/2}}{|t'-y|^{1/2}}= \frac{\rho^{1/2}}{|t-y|^{1/2}}, 
$$
which yields \rf{eq8}.
\fiproof

\vvv


\section{The higher dimensional case $n>1$}\label{sec4}

In this section we complete the proof of Main Lemma \ref{mainlemma}. Although we will focus mainly in the case $n>1$, the arguments
in this section are also valid for the case $n=1$. In the preceding section we have studied separately the case $n=1$, 
$s\in [1/2,1)$ because the arguments are more elementary, especially for $s=1/2$.

\subsection{An auxiliary lemma}

In the case $n>1$, we cannot use conformal transformations as in the planar case. Instead, we will use the following auxiliary
result. 

\begin{lemma}\label{lemaux}
For $n>1$, $r>0$, let $ E= (B(0,2r) \setminus B(0,r)) \cap (\R^n\times \{0\})$ and $\Omega = \R^{n+1}\setminus E$.
Let $B$ be a ball centered in $E$ such that $\rho := \rad(B) \approx \dist(B,\partial B(0,r))$, with $0<\rho\leq r/2$. Then we have
$$\omega^0(B) \gtrsim \Big(\frac \rho r\Big)^{n-\frac12},$$
where $\omega^0$ stands for the harmonic measure for $\Omega$ with pole in $0$.
\end{lemma}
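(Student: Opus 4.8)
The plan is to reduce the $n$-dimensional estimate to the already-established planar one by means of a suitable two-dimensional slice, combined with the Kelvin transform to handle the pole at the origin. First I would apply the Kelvin transform $T(x)=x^*=x/|x|^2$ centered at $0$. Since $0\notin E$ and $E$ lies in the hyperplane $\R^n\times\{0\}$, the set $E^* = T(E)$ is again contained in that hyperplane, and $E^*$ is comparable (up to bilipschitz distortion bounded in terms of the scale, since $\dist(0,E)\approx r$) to a flat annular region at scale $\approx 1/r$. By \eqref{eqkelvin}, $\omega^0_\Omega(B) \approx \omega^{0}_{\Omega^*}(B^*)$ with $B^* = T(B)$ a ball centered in $E^*$ of radius $\rho^* \approx \rho/r^2$ and with $\dist(B^*,\partial T(B(0,r))) \approx \rho/r^2$; crucially the ratio $\rho^*/(\text{outer scale})$ is still $\approx \rho/r$. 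So after rescaling we may assume $r=1$, that $E$ is (comparable to) $(B(0,2)\setminus B(0,1))\cap(\R^n\times\{0\})$, and that the pole is a fixed point $p_0$ at distance $\approx 1$ from $E$; by the change-of-pole formula (Theorem~\ref{teounif}, valid since $\Omega$ is uniform) and a Harnack chain argument it suffices to prove $\omega^{p_0}(B)\gtrsim \rho^{n-\frac12}$ for one convenient choice of pole.

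The core step is a localization and symmetrization reducing to a two-dimensional configuration. Let $\xi\in E$ be the center of $B$, and let $P$ be the $2$-plane through $0$, through $\xi$, and orthogonal to $E$ in the appropriate sense — concretely, $P$ should be spanned by the radial direction $\xi/|\xi|$ (which lies in $\R^n\times\{0\}$) and the vertical direction $e_{n+1}$. Then $P\cap E$ is a planar annulus-like segment pair: two intervals on the line $P\cap(\R^n\times\{0\})$, namely $\{t\,\xi/|\xi| : 1\le |t|\le 2\}$. I would enlarge the obstacle by replacing $E$ with the larger set $\widetilde E$ obtained by ``thickening'' only in the directions transverse to $P$, i.e. replace $E$ by the full flat annulus $A(0,1,2)\cap(\R^n\times\{0\})$ — but $E$ already is essentially this, so instead the point is to pass to a \emph{smaller} domain: replace $\Omega$ by the complement of the solid region $\{x\in\R^{n+1}: \dist(x, P\cap E)\le \text{something}\}$ is wrong; rather, following the strategy of Section~\ref{sec3}, replace $E$ near $\xi$ by a half-line (or rather a half-disk) segment $J$ glued to $E_1 := E\cap \bar B(\xi, c)$, use the maximum principle to pass to the domain whose boundary is $E_3 := $ the full flat annular ``radial segment'' through $\xi$, and then estimate $\omega^{p_0}_{\Omega_3}(J)$ directly. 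For the flat annulus $E_3 = A(0,1,2)\cap(\R^n\times\{0\})$ the harmonic measure cannot be written via a conformal map, but its harmonic measure density is invariant under rotations of $\R^n$, so one can compute or estimate it by integrating the planar density \eqref{eq07} along radial slices; more precisely, I expect $\omega^{p}_{\Omega_3}$ restricted to a small ball near the inner boundary behaves like $\dist(\cdot,\partial B(0,1))^{-1/2}$ times $\HH^n|_{E_3}$, exactly as in the planar case \eqref{eq07}, because the geometry is a product of the planar ``slit'' picture with a flat $(n-1)$-dimensional factor. Then $\omega^{p_0}(B) \gtrsim \omega^{p_0}_{\Omega_3}(J) \gtrsim \int_{J} \dist(z,\partial B(0,1))^{-1/2}\,d\HH^n(z) \approx \rho^{n-1}\cdot \rho^{1/2} = \rho^{n-1/2}$, where the factor $\rho^{n-1}$ is the $\HH^n$-measure of $J$ (an $n$-dimensional ball of radius $\approx\rho$) and $\rho^{1/2}$ is the Poisson-kernel type weight, together with Lemma~\ref{lembourgain} and Lemma~\ref{lemcap-cont} (CDC) to control the exit through the part of $E$ removed during localization.

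The main obstacle, and the place where real work is needed, is the estimate of the harmonic measure for the complement of the flat solid annulus $E_3 = A(0,1,2)\cap(\R^n\times\{0\})$ in $\R^{n+1}$, since for $n>1$ there is no conformal uniformization. I would handle it by exploiting rotational symmetry: write a point of $\R^{n+1}$ in coordinates $(u,\theta,v)$ where $u=|x'|\in[0,\infty)$ is the radial coordinate in $\R^n$, $\theta\in\mathbb S^{n-1}$ is the angular variable, and $v\in\R$ is the last coordinate. Harmonic measure with pole on the axis is rotation-invariant, so $\omega^{p}_{\Omega_3}$ is a measure on $E_3$ that is a function of $u$ alone times $\HH^n|_{E_3}$; to get the lower bound on the density near $u=1$ one compares with the planar ``slit plane'' model by freezing $\theta$ and noting that the $(n+1)$-dimensional Laplacian in these coordinates, restricted near the slit $\{u\in[1,2], v=0\}$, is a uniformly elliptic operator in the $(u,v)$-variables with smooth bounded coefficients (the angular Laplacian contributes a lower-order, favorably-signed term after one checks monotonicity), so boundary Harnack / barrier arguments using the planar square-root barrier $\real\sqrt{(u-1)+iv}$ give the matching lower bound $\omega^{p}_{\Omega_3}(B(\xi,\rho))\gtrsim \rho^{1/2}\cdot\rho^{n-1}$. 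Assembling: the Kelvin reduction, the localization via the maximum principle exactly as in Section~\ref{sec3} (replacing intervals by $n$-dimensional balls and using Lemmas~\ref{lembourgain}, \ref{lemcap-cont}), the change-of-pole formula, and the symmetric barrier estimate for $\Omega_3$ together yield $\omega^0(B)\gtrsim(\rho/r)^{n-\frac12}$, which is the claim.
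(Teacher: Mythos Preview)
Your high-level strategy (Kelvin transform to move the pole, then feed in the two-dimensional $\sqrt{\rho/r}$ estimate) matches the paper's, but the middle of your argument is muddled and the decisive technical step is different and, as written, does not go through. The passage where you try to choose $E_3$ and then realize ``$E$ already is essentially this'' reflects that you have not found the right enlargement of the obstacle. After Kelvin the paper does three successive enlargements by the maximum principle: from the inverted annulus to the full flat disk $E_2=B(0,1)\cap(\R^n\times\{0\})$, then to the cube $R=[-1,1]^n\times\{0\}$, and finally to the \emph{infinite strip} $\wt I_0=[-1,1]\times\R^{n-1}\times\{0\}$. The point of the strip is that the lifted planar function $f(x)=\omega_{I_0}^{(x_1,x_{n+1})}(J)$ is \emph{exactly} $\Delta_{\R^{n+1}}$-harmonic and \emph{exactly} vanishes on $\wt I_0\setminus\wt J$, so $f=\wt\omega^{\,\cdot}(\wt J)$ with no curvature or drift errors. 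A short symmetry-and-counting argument then converts the strip estimate into a cube estimate: splitting $\wt J\cap R$ into $\ell(Q)^{1-n}$ translates of $Q$, all with comparable $\wt\omega$-mass by Harnack, gives $\wt\omega^{p_1}(Q)\gtrsim \ell(Q)^{n-1}\,\wt\omega^{p_1}(\wt J)\approx\ell(Q)^{n-1}\,\omega_{I_0}(J)\approx\ell(Q)^{n-\frac12}$. This replaces your proposed density computation entirely.

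Your alternative via rotational symmetry reduces to the two-variable operator $Lg=g_{uu}+\frac{n-1}{u}g_u+g_{vv}$ on $\{u>0\}$ with the slit $\{1\le u\le 2,\ v=0\}$, and here there is a genuine gap. First, the specific barrier you name, $\real\sqrt{(u-1)+iv}$, vanishes on $\{u\le 1,\ v=0\}$ and is \emph{positive} on the slit, so it does not satisfy the boundary condition. The correct model vanishing on the slit near its inner tip is $g=\real\sqrt{-\zeta}$ with $\zeta=(u-1)+iv$; but then a direct computation gives $g_u=-\tfrac12\,\real\!\big(1/\sqrt{-\zeta}\big)<0$, hence $Lg=\frac{n-1}{u}g_u<0$: $g$ is a \emph{super}solution, so comparison yields only upper bounds on $L$-harmonic functions vanishing on the slit, not the lower bound you need. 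Your ``favorably-signed'' claim is therefore incorrect. A boundary Harnack argument could in principle rescue the approach, but it would require comparing the Green function to an $L$-harmonic (not $\Delta_2$-harmonic) reference function with known $\sqrt{\cdot}$ asymptotics at the tip, which you have not constructed; and the slit edge is curved, so one also has to flatten and control the resulting variable coefficients. The paper's strip trick avoids all of this: by passing to a translation-invariant obstacle it makes the planar barrier exact in $\R^{n+1}$ and turns the transverse $(n-1)$ dimensions into a pure counting factor $\ell(Q)^{n-1}$.
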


Quite likely this result can be proven by an explicit computation of the density $\frac{d\omega^0}{d\HH^n|_E}$. However, 
I have not found in the literature such computation, which seems to be non-trivial. For this reason, below I show a 
different argument relying on the planar case.

\begin{proof}
Observe first that, by rescaling, we can assume that $r=1$, and we can estimate $\omega^{p_0}(B)$ for $p_0=(0,\ldots,0,1/2)$, say,
instead of $\omega^0(B)$, since $\omega^0(B)\approx \omega^{p_0}(B)$.
Next, by applying the Kelvin transform $T x= \frac x{|x|^2}$, the statement in the lemma is equivalent to saying that, for
$\Omega_1= \R^{n+1}\setminus E_1$ with $E_1 = T(E) = (B(0,1) \setminus B(0,1/2)) \cap (\R^n\times \{0\})$, and any ball $B$ centered 
in $E_1$ such that $\rho := \rad(B) \approx \dist(B,\partial B(0,1))$, with $0<\rho\leq 1/2$, it holds
$$\omega_1^{p_1}(B) \gtrsim  \rho^{n-\frac12},$$
where $\omega_1^{p_1}$ stands for the harmonic measure for $\Omega_1$ with pole in $p_1:=T(p_0)=(0,\ldots,0,2)$.
Consider now $\Omega_2= \R^{n+1}\setminus E_2$ with $E_2 = B(0,1)  \cap (\R^n\times \{0\})$. By the maximum principle, it is clear 
that
$\omega_1^{p_1}(B) \geq \omega_2^{p_1}(B).$
Hence it suffices to prove that 
\begin{equation}\label{eqrk5*}
\omega_2^{p_1}(B) \gtrsim  \rho^{n-\frac12}.
\end{equation}

To prove \rf{eqrk5*}, we may assume that the ball $B$ is centered in the segment $L\subset\R^{n+1}$ connecting the origin to the point $(1,0,\ldots,0)$. Then
we let $Q$ be an $n$-dimensional cube concentric with $B$ (with sides parallel to the axes), contained in $E_2\cap B$, with side length comparable to $\rad(B)$, and we set $R=[-1,1]^n\times \{0\}$. Since $Q\subset B\cap E_2$ and
$E_2\subset R$, by the maximum principle we have
\begin{equation}\label{eqmpr5*}
\omega_2^{p_1}(B) \geq \omega_R^{p_1}(Q),
\end{equation}
where $\omega_R$ stands for the harmonic measure for the domain $\R^{n+1}\setminus R$.
To estimate $\omega_R^{p_1}(Q)$ from below, we can assume that this is an $n$-dimensional dyadic cube descendant of $R$, by translating and reducing $Q$ suitably if necessary.
 We let $I_0 = [-1,1]$ and $J=L\cap Q$, so that, abusing notation, $J\subset I_0$
and $\dist(J,\partial I_0)\approx \ell(J)$, where we are identifying $\R=\R\times \{0_{\R^n}\}\subset\R^{n+1}$.
Considering now $I_0,J$ as subsets of $\R\equiv \R\times \{0\}\subset \C$, we let
$$u(z) = \omega_{I_0}^z(J),$$
where $\omega_{I_0}^z$ is the harmonic measure for the domain $\C\setminus I_0$ with pole in $z$. Next we denote 
$\wt I_0= I_0   \times \R^{n-1}\times \{0\}$, $\wt\Omega_0 = \R^{n+1}\setminus \wt I_0$, $\wt J =  J   \times \R^{n-1}\times \{0\}$ and we take the
function $f:\wt \Omega_0\to\R$ defined by
$f(x) = u(x_1,x_{n+1})$, for $x=(x_1,\ldots,x_{n+1})$. Notice that $f$ is non-negative, bounded by $1$,  and harmonic in
$\R^{n+1}\setminus \wt I_0$. Moreover, it extends continuously to $0$ in $\wt I_0\setminus \overline{\wt J}$, and to $1$ in $J^\circ \times \R^{n-1} \times \{0\} $ (here $J^\circ$ is the interior of $J$ in $\R$, i.e., $J$ minus its end-points). Then it follows that $f(x) =\wt \omega^x(\wt J)$, where $\wt\omega^x$ is the harmonic measure for $\wt\Omega_0$ with pole in $x$.

We claim that 
\begin{equation}\label{eqclaim1*}
\wt \omega^{p_1}(\wt J)\approx \wt \omega^{p_1}(\wt J \cap R).
\end{equation}
To prove this, denote $g(x) =\wt \omega^{x}(\wt J \cap R)$, so that  
$$f(x) - g(x) = \wt \omega^x(\wt J \setminus R).$$
For a given $\delta\in(0,1/2)$, let $z_\delta\in\C\setminus I_0$ be such that $\dist(z_\delta,I_0)=\delta $ and
$$u(z_\delta) = \max_{z\in\C:\dist(z,I_0)=\delta} u(z) = \max_{z\in\C:\dist(z,I_0)=\delta} \omega_{I_0}^z(J),$$
so that by the maximum principle $u(z) \leq u(z_\delta)$ for any $z\in\C$ such that $\dist(z,I_0)\geq\delta$.
Let $q_\delta= (\Re z_\delta,0,\ldots,0,\Im z_\delta)$. 
By the definition of $f$, it turns out that 
\begin{equation}\label{eqfmax}
f(x) \leq f(q_\delta)\quad \mbox{ for all $x\in\R^{n+1}$ such that $\dist(x,\wt I_0)\geq \delta$.}
\end{equation}
Then, by the (local) CDC of $\wt I_0$, since $f-g$ vanishes identically in $R\times\{0\}$, we deduce that
$$|f(x)-g(x)| \lesssim 
\sup_{y\in 3I_0\times (\frac34 I_0)^n}
|f(y)-g(y)|\, \dist(x,R)^\alpha\quad \mbox{ for all $x\in 2I_0\times (\frac12 I_0)^n$.}$$
Since $p_1$ is a local John point for $\wt\Omega_0$ in $4I_0\times I_0^n$,
the supremum above is bounded by $C|f(p_1)-g(p_1)|$, by Lemma \ref{lem***}. Then, choosing $x=q_\delta$ 
and taking into account that $f(p_1) \leq f(q_\delta)$ by \rf{eqfmax},
we obtain
$$|f(q_\delta)-g(q_\delta)| \lesssim |f(p_1)-g(p_1)|\, \dist(q_\delta,R)^\alpha\leq f(p_1)\, \delta^\alpha \leq f(q_\delta)\, \delta^\alpha.$$
So, choosing $\delta$ small enough (independent of $f$ and $g$), we derive $f(q_\delta)\approx g(q_\delta)$.
Since $f(p_1) \approx f(q_\delta)$ and $g(p_1)\approx g(q_\delta)$ (with implicit constants depending on $\delta$), we deduce \rf{eqclaim1*}, as claimed.

Next we split
$R= \bigcup_{i=1}^{N} P_i$, where $P_i$'s are dyadic $n$-dimensional cubes, descendants of $R$, with the same side length as $Q$,
and $N=\ell(Q)^{-n}$. Observe that $\wt J\cap R$ coincides with the union of $N_J:=\ell(Q)^{1-n}$ of the cubes $P_i$. So reordering the
family of such cubes if necessary, we have
$$\wt \omega^{p_1}(\wt J \cap R) = \sum_{i=1}^{N_J}\wt \omega^{p_1}(P_i).$$
It is easy to check that $\wt \omega^{p_1}(P_i)\approx \wt \omega^{p_1}(P_j)$ for all $1\leq i,j\leq N_J$. Indeed, if $c_i$ stands for the center of $P_i$ and we take the corkscrew point $z_i = c_i+(0,\ldots,0,1)$, by symmetry we have
$\wt \omega^{z_i}(P_i)= \wt \omega^{z_j}(P_j)$ for all $i,j$, and by Harnack, $\wt \omega^{z_i}(P_i)\approx \wt \omega^{p_1}(P_i)$ for all $i$.
Recalling that $Q$ coincides with one of the cubes $P_i$, using also \rf{eqclaim1*}, we get
$$\wt \omega^{p_1}(Q) \gtrsim \frac1{N_J}\sum_{i=1}^{N_J}\wt \omega^{p_1}(P_i) = \ell(Q)^{n-1}\,\wt \omega^{p_1}(\wt J \cap R)
\approx \ell(Q)^{n-1}\,
\wt \omega^{p_1}(\wt J).
$$
By construction, we have $\wt \omega^{p_1}(\wt J)= \omega_{I_0}^{(0,1)}(J)$, and using the conformal transformation in
\rf{eq6} and the identity \rf{eq07}, it is immediate to check that $\omega_{I_0}^{(0,1)}(J)\approx \ell(J)^{1/2}= \ell(Q)^{1/2}$. Therefore,
$$\wt \omega^{p_1}(Q) \gtrsim \ell(Q)^{n-\frac12}.$$
By the maximum principle, we have $\omega_R^{p_1}(Q)\geq \wt \omega^{p_1}(Q)$, and then by \rf{eqmpr5*}, we deduce
that 
$$\omega_2^{p_1}(B) \gtrsim \ell(Q)^{n-\frac12} \approx \rho^{n-\frac12},$$
which proves \rf{eqrk5*} and the lemma.
\end{proof}

\vv

\subsection{Proof of \rf{eq2}}\label{sec2.1*}

We assume that $E\subset\R^n\equiv \R^n\times\{0\}$. By a pigeon-hole argument, there is
an open ball $B_1:=B(x_1,r_1)\subset\R^{n+1}$ centered in $\R^n$, which satisfies:
$$2\bar B_1\subset B(x,r),\qquad
B_1\cap E = \varnothing,\qquad \partial B_1\cap E\neq\varnothing,\qquad r_1\approx r.$$
We choose $y\in \partial B_1\cap E$. 

As in the case $n=1$, we intend to apply 
 a localization argument. We denote $E_1 = E\cap \bar B(x_1,2r_1)$, $\Omega_1= E_1^c$.
We claim that for any subset $F\subset E_1\cap  B(x_1,1.5r_1)$,
\begin{equation}\label{eq3**}
\omega_{1}^{x_1}(F)\approx_s \omega^{x_1}(F),
\end{equation}
where $\omega_1$ stands for the harmonic measure for $\Omega_1$. 
To prove the claim, consider first $p\in \partial B(x_1,1.8r_1)$ such that
$$\omega_1^p(F) = \max_{q\in \partial B(x_1,1.8r_1)}\omega_1^q(F).$$
Using the fact that $\omega^z_1(F)$ is harmonic in $\Omega$ and vanishes in $E_1\setminus F$, we get
\begin{align*}
\omega_1^p(F) &= \int_E \omega_1^z(F)\,d\omega^p(z) = \omega^p(F) +  \int_{E\setminus E_1} \omega_1^z(F)\,d\omega^p(z)\leq \omega^p(F) + \sup_{z\in E\setminus E_1}\omega_1^z(F)\,\,\omega^p(E\setminus E_1).
\end{align*} 
Since $\omega_1^z(F)$ is harmonic in $\R^{n+1}\setminus E_1$, vanishes in $E_1\setminus \bar B(x_1,1.5r_1)$ and 
at $\infty$,  and $E\setminus E_1\subset\R^{n+1}\setminus \bar B(x_1,1.5r_1)$, by the maximum principle we have
$$\sup_{z\in E\setminus E_1}\omega_1^z(F) \leq \max_{q\in \partial B(x_1,1.8r_1)}\omega_1^q(F) = \omega_1^p(F).$$
By the H\"older continuity of $\omega_1^z(F)$ in $\partial B(x_1,1.8r_1)$, which is far away from $F$ and $E\setminus E_1$,
and by the uniformity of $\Omega$ and Lemma \ref{lem***},
we derive
$$\sup_{q\in \partial B(x_1,1.8r_1):\dist(q,\partial\Omega)\leq \delta}\omega_1^q(F) \lesssim \bigg(\frac{\delta}{r_1}\bigg)^\alpha\sup_{q\in A(x_1,1.6r_1, 2r_1)}\omega_1^q(F)\approx \bigg(\frac{\delta}{r_1}\bigg)^\alpha\sup_{\substack{q\in \partial B(x_1,1.8r_1):\\ \dist(q,\partial\Omega)\geq r_1}}\omega_1^q(F)
,$$
for some $\alpha>0$ depending on $s$. So we derive that  $\dist(p,E)\approx r_1$.

Since $p$ is a corkscrew point for $\Omega$ relative to $B(y,r_1)$, by the CDC and a Harnack chain argument,
$\omega^p(E_1)\geq \delta_0,$
for some $\delta_0>0$ depending just on $s$. 
Hence, $\omega^p(E\setminus E_1)\leq 1-\delta_0$. 
Therefore,
$$\omega_1^p(F)\leq \omega^p(F) + \omega_1^p(F)\,(1-\delta_0),$$
or equivalently,
$\omega_1^p(F) \leq \delta_0^{-1}\,\omega^p(F).$
Since both $x_1$ and $p$ are corkscrew points for $\Omega$ relative to $B(y,2r_1)$, we deduce that 
$$\omega_1^{x_1}(F) \lesssim \omega^{x_1}(F).$$
 On the other hand, by the maximum principle, we have trivially
$\omega_1^{x_1}(F) \geq \omega^{x_1}(F)$, which concludes the proof of the claimed estimate \rf{eq3**}.

Next we will perform another modification of the domain $\Omega_1$. For $0<\rho\leq r_1/4$, we denote $E_2 = E_1\cup \big(\bar B(y,\rho/2)\cap \R^n\setminus B_1\big)$ and we let $\Omega_2=\R^{n+1}\setminus E_2$.
By arguments analogous to the ones for the case $n=1$, we infer that
\begin{equation}\label{eq4**}
\omega_1^{x_1}(B(y,\rho))\gtrsim \omega_2^{x_1}(\bar B(y,\rho/2))\geq \omega_1^{x_1}(\bar B(y,\rho/2)).
\end{equation}
Finally, we let $E_3 = (2\bar B_1 \setminus B_1)\cap \R^n$ and $\Omega_3 = \R^{n+1}\setminus E_3$. By the maximum principle and Lemma~\ref{lemaux}, we have
$$\omega_2^{x_1}(\bar B(y,\rho/2))\geq \omega_3^{x_1}(B(y,\rho/2)) \gtrsim \Big(\frac\rho r\Big)^{n-\frac12}.
$$
Together with \rf{eq3**} and \rf{eq4**}, 
and the change of pole formula, as in \rf{eqcanvipol}, this yields \rf{eq2}.

\vv


\subsection{The case $s\in (n-\frac12-\ve,\,n-\frac12]$}\label{sec2.3*}

In this case, instead of obtaining an explicit estimate for 
$\frac{\omega(B(y,\rho))}{\omega(B(x,r))}$, we will argue by contradiction. 

We assume that $\ve\leq 1/4$. In this situation it is easy to check that the estimates obtained in Subsection \ref{sec2.1*} 
involving the domains $\Omega_1,\Omega_2,\Omega_3$
are uniform in $s$, assuming the AD-regularity constant to be also uniform in $s$.
In the rest of this subsection we assume also that all the implicit constants involved in the relations $\lesssim$, $\gtrsim$, $\approx$ are uniform in $s$, for $s\in (n-\frac12-\ve,\,n-\frac12]$.

For a given $M\gg1$, suppose that
\begin{equation}\label{eq10*}
M^{-1}\Big(\frac{\rho}r\Big)^s\leq \frac{\omega^{p_0}(B(y,\eta))}{\omega^{p_0}(B(x,r))}\leq M\Big(\frac{\rho}r\Big)^s
\quad \mbox{ for $\rho\leq \eta\leq r$}.
\end{equation}
For $x_1$ as above, we write
$$\omega_2^{x_1}(\bar B(y,\rho/2)) = \int_{E_3} \omega_2^z(\bar B(y,\rho/2))\,d\omega_3^{x_1}(z).$$
Consider the annuli $A_k= A(y,2^k\rho,2^{k+1}\rho)$, for $k\geq 1$. 
Let $N = [\log_2 \frac {r_1}\rho]$.
By the $s$-AD regularity of $E$ and pigeon-holing, for every $k\in[1,N]$ there exists a ball $B_k$ centered in $E_3$ such that
$2B_k\cap \R^n\subset A_k\cap E_3\setminus E$ with $\rad(B_k)\approx 2^k\rho$. So we have
$$\omega_2^{x_1}(\bar B(y,\rho/2)) \geq \sum_{k=1}^N \int_{E_3\cap B_k} \omega_2^z(\bar B(y,\rho/2))\,d\omega_3^{x_1}(z)
\gtrsim \sum_{k=1}^N\omega_3^{x_1}(B_k)\,\inf_{z\in B_k} \omega_2^z(\bar B(y,\rho/2)).$$
By Lemma \ref{lemaux}, we have
$$\omega_3^{x_1}(B_k) \gtrsim  \Big(\frac{2^k\rho} r\Big)^{n-\frac12}.$$
By the maximum principle and the change of pole formula, for any $z\in B_k$,
$$\omega_2^z(\bar B(y,\rho/2)) \geq \omega_1^z(\bar B(y,\rho/2)) \geq \omega^z(B(y,\rho/2)) \approx 
\frac{\omega^{p_0}(B(y,\rho/2))}{\omega^{p_0}(B(y,2^k\rho))}\geq M^{-2} \frac{\rho^s}{(2^k\rho)^s} = M^{-2}\,2^{-ks}.$$

In the case $s=n-\frac12$, we obtain
$$\omega_2^{x_1}(\bar B(y,\rho/2)) \gtrsim M^{-2} \sum_{k=1}^N \Big(\frac{2^k\rho} r\Big)^{n-\frac12}\,2^{-k(n-\frac{1}2)}=
M^{-2} N\,\Big(\frac{\rho} r\Big)^{n-\frac12}.$$
By the change of pole formula and the relationship between $\omega$, $\omega_1$, $\omega_2$, this yields
\begin{equation}\label{eq67*}
\frac{\omega^{p_0}(B(y,\rho))}{\omega^{p_0}(B(x,r))}\approx
\omega^{x_1}(B(y,\rho)) \approx \omega_1^{x_1}(B(y,\rho)) \gtrsim \omega_2^{x_1}(\bar B(y,\rho/2)) \gtrsim M^{-2} N\,\Big(\frac{\rho} r\Big)^{n-\frac12}.
\end{equation}
For $N$ big enough, this contradicts the assumption \rf{eq10*}. Hence \rf{eq1} holds, with $\rho$ in \rf{eq1}
replaced by some $\eta$ such that $\rho\leq \eta\leq r$.

For $s\in (n-\frac12-\ve,\,n-\frac12)$, we have
$$\omega_2^{x_1}(\bar B(y,\rho/2)) \gtrsim M^{-2} \sum_{k=1}^N \Big(\frac{2^k\rho} r\Big)^{n-\frac12}\,2^{-ks}=
M^{-2} \Big(\frac{\rho} r\Big)^{n-\frac12} \sum_{k=1}^N 2^{k(n-\frac12-s)}
.$$
Assuming $N$ big enough, or equivalently $\rho$ small enough (depending on $|n-\frac12-s|$),  we write
$$ \sum_{k=1}^N 2^{k(n-\frac12 -s)} \geq \sum_{k=0}^{N-1} 2^{k(n-\frac12 -s)} =  \frac{2^{N(n-\frac12-s)} - 1}{2^{n-\frac12 -s} - 1}\approx
\frac{2^{N(n-\frac12-s)}}{2^{n-\frac12 -s} - 1} \approx \frac{r^{n-\frac12 -s}}{(n-\frac12-s)\,\rho^{n-\frac12 -s}}.
$$
Hence,
$$\omega_2^{x_1}(\bar B(y,\rho/2))  \gtrsim M^{-2}\,\frac{\rho^s}{(n-\frac12-s)\,r^s}.$$ 
As in \rf{eq67*}, this implies
$$\frac{\omega^{p_0}(B(y,\rho))}{\omega^{p_0}(B(x,r))}\gtrsim M^{-2}\,\frac{\rho^s}{(n-\frac12-s)\,r^s}\geq M^{-2}\,\frac{\rho^s}{\ve\,r^s}.$$
For $N$ big enough and $\ve$ small enough, this contradicts \rf{eq10*} and so \rf{eq1} holds, with $\rho$ in \rf{eq1}
replaced by some $\eta$ such that $\rho\leq \eta\leq r$.
\fiproof


\section{Extension to subsets of $C^1$ manifolds}

In this section we extend the main results of the paper  
to $s$-AD regular subsets of $n$-dimensional $C^1$ manifolds. We prove the following.

\begin{theorem}\label{teoC1}
For $n\geq1$ and $s\in [n-\frac12,n)$, let $E\subset \R^{n+1}$ be an $(s,C_0)$-AD regular compact set contained in an $n$-dimensional $C^1$ manifold. Let $\Omega=\R^{n+1}\setminus E$ and denote
by $\omega$ the harmonic measure for $\Omega$. Then
$\dim \omega < s.$
\end{theorem}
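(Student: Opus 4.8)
The plan is to reduce the $C^1$-manifold case to the hyperplane case already treated in Main Lemma \ref{mainlemma}, using a standard blow-up/localization argument combined with the quantitative (scale-uniform) nature of the estimates. Since $\dim\omega<s$ follows from Lemma \ref{lem-dim} once we verify the dichotomy \rf{eq1} for $\omega=\omega^{p_0}$ at every point $x\in E$ and every scale $0<r\le\diam(E)$, it suffices to establish an analogue of Main Lemma \ref{mainlemma} with $E$ contained in a $C^1$ manifold $\Sigma$ instead of a hyperplane. Fix $x\in E$ and a scale $r$. Because $\Sigma$ is $C^1$, at scale $r$ small enough (depending on the modulus of continuity of the tangent planes of $\Sigma$ near $x$) the piece $\Sigma\cap B(x,2r)$ is, after a rotation and translation, the graph of a $C^1$ function $\varphi$ over the tangent hyperplane $P=T_x\Sigma$ with $\|\nabla\varphi\|_{L^\infty(B(x,2r)\cap P)}\le\eta(r)$, where $\eta(r)\to0$ as $r\to0$. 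In particular $E\cap B(x,2r)$ lies within distance $\eta(r)\,r$ of the hyperplane $P$.

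First I would redo the construction of the auxiliary intervals/balls $B_1, E_1, E_2, E_3$ exactly as in Sections \ref{sec3}--\ref{sec4}, but now working inside the manifold: by $s$-AD regularity of $E$ (which is intrinsic and insensitive to the ambient $C^1$ chart) one finds a ball $B_1=B(x_1,r_1)$ with $x_1\in\Sigma$, $r_1\approx r$, $2\bar B_1\subset B(x,r)$, $B_1\cap E=\varnothing$, $\partial B_1\cap E\neq\varnothing$, and a touching point $y\in\partial B_1\cap E$; similarly one produces the flat "annulus" comparison set $E_3\subset\Sigma$. The key point is that every estimate in Sections \ref{sec3}--\ref{sec4} that was used — Lemma \ref{lembourgain}, Lemma \ref{lemcap-cont}, the CDC, the uniformity of $\Omega$, the change-of-pole formula (Theorem \ref{teounif}), the boundary Hölder estimate (Lemma \ref{lemholder1}), and Lemma \ref{lem***} — holds verbatim, since they depend only on $s$-AD regularity and the resulting uniform/CDC structure of $\Omega=E^c$, not on flatness. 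The \emph{only} place flatness entered essentially was Lemma \ref{lemaux} (and, for $n=1$, the explicit conformal computation \rf{eq6}--\rf{eqtheta2}), which gave the lower bound $\omega^0(B)\gtrsim(\rho/r)^{n-1/2}$ for the harmonic measure of a small ball on a flat annulus.

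The main obstacle is therefore to establish the analogue of Lemma \ref{lemaux} when the flat annulus $E=(B(0,2r)\setminus B(0,r))\cap(\R^n\times\{0\})$ is replaced by its image under a $C^1$ diffeomorphism close to the identity — equivalently, when $E$ is a graph over the annulus with small $C^1$ norm. I would handle this by a perturbation/compactness argument: after rescaling to $r=1$, write $E_\eta$ for the graph of a function with $\|\nabla\varphi\|_\infty\le\eta$; the domains $\Omega_\eta=\R^{n+1}\setminus E_\eta$ converge to $\Omega_0$ in the Hausdorff sense as $\eta\to0$, all are uniform domains satisfying the CDC with \emph{uniform} constants (by the $s$-AD regularity, uniform in $\eta$), and hence the corresponding harmonic measures $\omega^{p_0}_\eta$ converge weakly to $\omega^{p_0}_0$ on compact subsets, with uniform doubling and uniform boundary Hölder control. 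Combining this weak convergence with the uniform nondegeneracy estimates (Lemma \ref{lembourgain} plus the CDC give a uniform lower bound $\omega^{p_0}_\eta(B(y,\rho))\gtrsim\rho^{n-1}$, and the change-of-pole and Hölder estimates give matching control), one obtains that for $\eta$ smaller than a threshold depending only on $n,s,C_0$, the estimate $\omega^{p_0}_\eta(B)\gtrsim(\rho/r)^{n-1/2}$ persists with the same implicit constant up to a factor $2$. Feeding this into the argument of Section \ref{sec4} — precisely as in Subsections \ref{sec2.1*} and \ref{sec2.3*}, where only the conclusion of Lemma \ref{lemaux}, the maximum principle, and the change-of-pole formula were invoked — yields \rf{eq2}, and then the refined summation over dyadic annuli $A_k$ (again scale-invariant, hence unaffected by the $C^1$ perturbation provided $r$ is small enough that $\eta(r)$ is below the threshold at \emph{all} intermediate scales, which holds since $\eta$ is a modulus of continuity) yields the dichotomy \rf{eq1} for $s\in[n-\tfrac12,n)$ and for $s\in(n-\tfrac12-\ve,n-\tfrac12]$. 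For the finitely many large scales $r\approx\diam(E)$ where the $C^1$ chart need not be close to flat, one notes that the dichotomy at those scales can be obtained from the already-proven small-scale dichotomy by iterating down a bounded number of steps, or simply absorbed by choosing $M$ in Lemma \ref{lem-dim} appropriately; either way $\dim\omega<s$ follows from Lemma \ref{lem-dim}, completing the proof. $\square$
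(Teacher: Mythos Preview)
Your approach is correct in spirit and hinges on the same key idea as the paper --- a compactness/blow-up reduction to the flat case --- but you apply it at a different level, and this makes your argument longer and a bit more delicate than necessary.

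The paper does not redo the construction of $B_1,E_1,E_2,E_3$ in the curved setting, nor does it prove a curved analogue of Lemma~\ref{lemaux}. Instead it proves a single auxiliary statement (Lemma~\ref{lemcompactness}): if $\beta_{\infty,E}(x,\gamma^{-1}r)\le\gamma^2$ for suitable $\gamma=\gamma(n,s,C_0,M)$, then the dichotomy \rf{eq1} already holds at $(x,r)$. The proof is a pure contradiction/limit argument in the Attouch--Wets topology: if it failed along a sequence $E_k$ with $\gamma=1/k$, one rescales, extracts a limit set $\wt E$ which is exactly flat (since $\beta_\infty\to0$ at all scales), passes the failure of \rf{eq1} through weak\,$*$ convergence of harmonic measures, and contradicts the already-proved Main Lemma~\ref{mainlemma} for $\wt E$. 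Since a $C^1$ manifold satisfies the $\beta_\infty$ smallness hypothesis at all sufficiently small scales, Theorem~\ref{teoC1} follows immediately from Lemma~\ref{lem-dim} (after a trivial adjustment of $c_1$ to absorb the finitely many large scales).

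Your route --- perturb Lemma~\ref{lemaux} itself and then replay Sections~\ref{sec3}--\ref{sec4} in the curved setting --- also works, but note one slip: your flatness threshold cannot depend ``only on $n,s,C_0$''. Weak convergence of $\omega_\eta^{p_0}$ gives the lower bound $\omega_\eta^{p_0}(B)\gtrsim(\rho/r)^{n-1/2}$ only for $\rho/r$ bounded away from zero, so the threshold $\eta_0$ must depend on the smallest ratio $\rho/r$ you use, hence on $M$. This is harmless (the constant $c$ in the Main Lemma already depends on $M$), but it should be stated. The paper's version sidesteps this entirely because the compactness is applied to the \emph{conclusion} \rf{eq1}, which at fixed $r=1$ involves only $\rho\in[c,1]$, so weak convergence handles it directly without any uniformity issue. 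In short: both approaches are valid; the paper's is a cleaner black-box reduction that never reopens the proof of the Main Lemma, while yours is a hands-on reworking that requires tracking one extra dependency.
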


Given $E\subset\R^{n+1}$, $x\in\R^{n+1}$ and $r>0$, we denote
$$\beta_{\infty,E} (x,r) = \inf_L \sup_{y\in B(x,r)\cap E}\frac{\dist(y,L)}r,$$
where the infimum is taken over all the hyperplanes $L\subset\R^{n+1}$.

To prove Theorem  \ref{teoC1}, we will use the following variant of the Main Lemma \ref{mainlemma}.

\begin{lemma}\label{lemcompactness}
For $n\geq1$, let $s\in[n-\frac12,n)$ and $C_0>0$. 
For all $M\geq 1$ there are constants $\gamma>0,\,c>0$ both depending on $n,s,C_0,M$ such that the following holds. Let $E\subset\R^{n+1}$ be an $(s,C_0)$-AD regular compact set, and $x\in E$, $0<r\leq \diam(E)$ such that
 $\beta_{\infty,E}(x,\gamma^{-1}r)\leq \gamma^2$, and $p\in \R^{n+1}\setminus (E\cup B(x,2r))$. Then there exists a ball $B(y,\rho)$ with $y\in B(x,r)$, $c\,r\leq \rho\leq r$, such that either
\begin{equation}\label{eq1**}
\frac{\omega^p(B(y,\rho))}{\rho^s} \geq M \,\frac{\omega^p(B(x,r))}{r^s}\qquad \mbox{or} \qquad \frac{\omega^p(B(y,\rho))}{\rho^s} \leq M^{-1} \,\frac{\omega^p(B(x,r))}{r^s}.
\end{equation}
\end{lemma}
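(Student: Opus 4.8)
The plan is to reduce to the Main Lemma \ref{mainlemma} by a compactness/blow-up argument, exploiting that the hypothesis $\beta_{\infty,E}(x,\gamma^{-1}r)\leq\gamma^2$ forces $E$ to be extremely close, in $B(x,r)$, to an $s$-AD regular subset of a hyperplane, for which we already have the dichotomy \rf{eq1}. First I would rescale so that $x=0$, $r=1$, and translate/rotate so that the minimizing hyperplane $L$ for $\beta_{\infty,E}(0,\gamma^{-1})$ is $\R^n\times\{0\}$; thus every point of $E\cap B(0,\gamma^{-1})$ lies within $\gamma^2\gamma^{-1}=\gamma$ of $\R^n$. Suppose, for contradiction, that the conclusion fails: there is a sequence $\gamma_j\to 0$ and $(s,C_0)$-AD regular compact sets $E_j$ (with $x_j=0$, $r_j=1$, minimizing hyperplane $\R^n$), poles $p_j\in\R^{n+1}\setminus(E_j\cup B(0,2))$, such that for \emph{every} ball $B(y,\rho)$ with $y\in B(0,1)$ and $c\,\leq\rho\leq 1$, \emph{both} alternatives in \rf{eq1**} fail, i.e.
$$M^{-1}\,\frac{\omega_j^{p_j}(B(0,1))}{1}\,\rho^s \;<\; \omega_j^{p_j}(B(y,\rho)) \;<\; M\,\frac{\omega_j^{p_j}(B(0,1))}{1}\,\rho^s$$
for all such $y,\rho$, where $c$ is the constant from Main Lemma \ref{mainlemma} and $\omega_j=\omega_{\Omega_j}$, $\Omega_j=\R^{n+1}\setminus E_j$.

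Next I would pass to limits. By AD-regularity and compactness, after passing to a subsequence we may assume $E_j\to E_\infty$ in the local Hausdorff distance (on $\bar B(0,2)$, say), and $E_\infty\cap B(0,2)$ is again $(s,C_0')$-AD regular for a slightly worse constant $C_0'$ depending only on $n,s,C_0$ (or one can keep $C_0$ with a standard argument), and since all points of $E_j\cap B(0,\gamma_j^{-1})$ lie within $\gamma_j\to0$ of $\R^n$, the limit satisfies $E_\infty\cap B(0,2)\subset\R^n\times\{0\}$. Also $p_j\to p_\infty\in\R^{n+1}\setminus(E_\infty\cup B(0,2))$ (after a further subsequence; if $p_j\to\infty$ in the plane case we use $p_\infty=\infty$). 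The key analytic input is stability of harmonic measure under Hausdorff convergence of the boundaries: because each $\Omega_j$ is uniform with uniform constants and satisfies the CDC with uniform constants (Lemma from \cite{DFM} and the AD-regularity), harmonic measure is weakly continuous — concretely, $\omega_j^{p_j}(B(y,\rho))\to\omega_\infty^{p_\infty}(B(y,\rho))$ for balls whose boundary is $\omega_\infty^{p_\infty}$-null, and $\omega_\infty^{p_\infty}(B(y,\rho))\approx\omega_j^{p_j}(B(y,\rho))$ for $j$ large with implicit constants independent of $j$ (this uses the CDC lower bound $\omega^x(B)\gtrsim1$, the doubling property, the Hölder estimate of Lemma \ref{lemholder1}, and a Harnack chain argument; the uniformity of all these constants across $j$ is exactly what the uniform AD-regularity buys us). Passing to the limit in the displayed two-sided inequality we conclude that for the limiting set $E_\infty$, which is $s$-AD regular and contained in a hyperplane, we have, for \emph{every} $y\in B(0,1)$ (in particular for every $y\in B(0,1)\cap E_\infty$) and every $\rho\in[c,1]$,
$$M^{-1}\,\omega_\infty^{p_\infty}(B(0,1))\,\rho^s \;\leq\; \omega_\infty^{p_\infty}(B(y,\rho)) \;\leq\; M\,\omega_\infty^{p_\infty}(B(0,1))\,\rho^s.$$
But this directly contradicts the Main Lemma \ref{mainlemma} applied to $E_\infty$, $x=0$, $r=1$, $p_0=p_\infty$, with the \emph{same} $M$ and its constant $c$: that lemma produces a ball $B(y,\rho)$, $y\in B(0,1)\cap E_\infty$, $c\leq\rho\leq1$, for which one of the two strict-type inequalities \rf{eq1} holds (with $B(x,r)=B(0,1)$). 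This is the desired contradiction, proving the lemma for $\gamma$ small enough; unwinding the rescaling gives the statement for general $E$, $x$, $r$.

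The main obstacle is the stability step: one needs that the quantities $\omega_j^{p_j}(B(y,\rho))$ do not degenerate as $j\to\infty$, i.e. that the harmonic measures converge (up to uniform multiplicative constants) to the harmonic measure of the limiting domain, \emph{uniformly for all the relevant balls}. The cleanest route I would take is: (i) fix $j$, use that $\Omega_j$ uniform + CDC implies $\omega_j^{p_j}$ is doubling on balls centered at $\partial\Omega_j$ with constants depending only on $n,s,C_0$ and on $\dist(p_j,E_j)\gtrsim1$, $\diam(E_j)\gtrsim1$ (all uniform); (ii) compare $\omega_j^{p_j}(B(y,\rho))$ with $\omega_j^{p_j}(B(0,1))$ via the change-of-pole formula (Theorem \ref{teounif}) and the lower bound $\omega_j^{z}(B)\gtrsim1$ from the CDC, so the ratio $\omega_j^{p_j}(B(y,\rho))/\omega_j^{p_j}(B(0,1))$ lies in a compact subinterval of $(0,\infty)$ depending only on $\rho$ and the structural constants; (iii) extract a convergent subsequence of these ratios and identify the limit with the corresponding ratio for $\Omega_\infty$ by a barrier/maximum-principle argument (the CDC of $\Omega_\infty$ gives nondegeneracy of the limit). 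A small additional technical point is that $B(y,\rho)$ for $y\in B(0,1)$ need not be centered on $\partial\Omega_j$; but in the contradiction hypothesis it suffices to restrict attention to $y\in B(0,1)\cap E_j$ (or $y$ within $\gamma_j$ of $E_j$), which is all that is needed since the Main Lemma's output ball is centered in $E_\infty\cap B(0,1)$ — so one runs the whole argument only over such balls. I would also note that the slight loss in the AD-regularity constant of $E_\infty$ (from $C_0$ to $C_0'$) is harmless because the Main Lemma's constant $c$ and threshold structure depend on $C_0$ in a controlled way, and one simply applies Main Lemma \ref{mainlemma} with $C_0'$, choosing $M$ and then $\gamma$ accordingly; to keep the bookkeeping clean one can instead arrange the blow-up so that the limit is $(s,C_0)$-AD regular on the unit scale by a standard normal-families argument for the AD-regularity constant.
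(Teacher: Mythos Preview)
Your approach is essentially the same as the paper's: a contradiction/compactness argument passing to a limit set contained in a hyperplane and invoking Main Lemma \ref{mainlemma}. Two small refinements from the paper are worth noting. First, the paper lets \emph{both} $\gamma$ and $c$ run to zero along the sequence (taking $\gamma=c=1/k$), so that in the limit the two-sided bound holds for \emph{all} $0<\rho\le 1$; this sidesteps your bookkeeping with $C_0$ versus $C_0'$ entirely, since whatever constant $c$ the Main Lemma outputs for the limit set is automatically covered. Second, the paper takes the global Attouch-Wets limit of the $E_k$ (not just a local Hausdorff limit on $\bar B(0,2)$) and uses that $\gamma_k^{-1}\to\infty$ to conclude the \emph{entire} limit set lies in a hyperplane, which is what Main Lemma \ref{mainlemma} requires; your ``$E_\infty\cap B(0,2)\subset\R^n$'' is not quite enough as stated. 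The paper also normalizes the pole to lie in $A(0,2,3)$ with $\dist(p_k,E_k)\ge c'>0$ (via uniformity and change of pole), which makes the weak-$*$ convergence $\omega_k^{p_k}\to\wt\omega^{\wt p}$ immediate. With these adjustments your argument matches the paper's.
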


\vv
To prove this lemma, we will use a compactness argument involving the so-called Attouch-Wets topology. This is a local variant of the Hausdorff metric topology for convergence of sets. For $r>0$ and non-empty sets $E,F\subset\R^{n+1}$, we denote
$$d_r(E,F) = \max\Big( \sup_{x\in E\cap {\bar B(0,r)}}\dist(x,F),\,\sup_{x\in F\cap {\bar B(0,r)}}\dist(x,E)\Big).$$
We say that a sequence of sets $E_k\subset\R^{n+1}$
converges to a set $F\subset\R^{n+1}$ in the Attouch-Wets topology if, for every $r>0$, $d_r(E_k,F)\to 0$ as $k\to\infty$.
By arguments very similar to the ones used to prove the compactness of a sequence of compact subsets of a compact metric space in the Hausdorff distance topology, it follows that if $E_k$ is a sequence of closed subsets of $\R^{n+1}$ intersecting a fixed ball, then there is a subsequence $E_{k_j}$ that converges in the Attouch-Wets topology to some closed set in $\R^{n+1}$.
See \cite{BL} for a recent exposition on this topic.

\begin{proof}[Proof of Lemma \ref{lemcompactness}]
Suppose the lemma fails. That is, there exist $n>1$, $s\in[n-\frac12,n)$, $C_0>0$, and $M>1$ such that for all $\gamma=1/k$ and $c=1/k$
there exists an $(s,C_0)$-AD regular set $E_k\subset \R^{n+1}$  
and $x_k\in E_k$, $0<r_k\leq \diam(E_k)$, such that
 $\beta_{\infty,E_k}(x_k,kr_k)\leq k^{-2}$, and $p_k\in \R^{n+1}\setminus (E_k\cup B(x,2r_k))$, and moreover
for any ball  $B(y,\rho)$ with $y\in B(x_k,r_k)$, $\frac1k\,r_k\leq \rho\leq r_k$, it holds
\begin{equation}\label{eq1***}
M^{-1}\frac{\omega_k^{p_k}(B(x_k,r_k))}{r_k^s} < \frac{\omega_k^{p_k}(B(y,\rho))}{\rho^s} < M \,\frac{\omega_k^{p_k}(B(x_k,r_k))}{r_k^s},
\end{equation}
where $\omega_k$ is the harmonic measure for $\R^{n+1}\setminus E_k$.
By dilating and translating suitably $E_k$, we can assume that $x_k=0$ and $r_k=1$. Also, by the uniformity and the $s$-AD-regularity of the domain 
$\R^{n+1}\setminus E_k$, we can also assume that $p_k\in A(0,2,3)$ and $\dist(p_k,E_k) \geq c'>0$.
By considering a subsequence, we can assume that the sets $E_k$ converge in the Attouch-Wets topology to some $(s,C_0)$-AD regular set $\wt E\subset\R^{n+1}$, and also that $p_k$ converges to some point 
$\wt p\in \overline{ A(0,2,3)}$ and $\dist(\wt p,\wt E) \geq c'>0$. It easily follows then that
$\omega_k^{p_k}$ converges weakly * to $\wt\omega^{\wt p}$, the harmonic measure for $\wt\Omega=\R^{n+1}\setminus\wt E$ with pole in $\wt p$.

Observe that, for $0<R<k$, we have
$$\beta_{\infty,E_k}(0,R)\leq \frac kR\,\beta_{\infty,E_k}(0,k)\leq \frac1{kR}\to 0 \quad \mbox{ as $k\to\infty$.}$$
Hence we infer that $\beta_{\infty,\wt E}(0,R) = 0$ for every $R>0$. That is to say, $\wt E$ is contained in some hyperplane, which passes through the origin because $0\in\wt E$. On the other hand, from \rf{eq1***} and the weak * convergence of $\omega_k^{p_k}$ to $\wt\omega^{\wt p}$ we deduce that, for $x=0$, $r=1$,
$$M^{-1}\frac{\wt \omega^{\wt p}(B(x,r))}{r^s} \leq \frac{\wt \omega^{\wt p}(B(y,\rho))}{\rho^s} \leq M \,\frac{\wt \omega^{\wt p}(B(x,r))}{r^s}\quad \mbox{ for all $y\in B(0,r)$, $0<\rho\leq r$.}$$
This contradicts Main Lemma \ref{mainlemma}.
\end{proof}
\vv

\begin{proof}[Proof of Theorem \ref{teoC1}] Just notice that if $E$ is compact, $s$-AD regular with $s\in[n-\frac12,n)$, and it is contained in an $n$-dimensional $C^1$ manifold, then the assumptions of Lemma \ref{lemcompactness} hold for any ball $B(x,r)$ centered in $E$ with 
small enough radius $r$. It is easy to check that then the assumptions of Lemma \ref{lem-dim} hold (by adjusting suitably the constant $c_1$ in the lemma if necessary), which ensures that $\dim\omega<s$.
\end{proof}
\vv

Finally remark that, by similar compactness arguments, one can also extend Theorem \ref{teo2} to subsets of $n$-dimensional $C^1$ manifolds:

\begin{theorem}\label{teoC2}
 Let $E\subset \R^{n+1}$ be a compact set contained in an $n$-dimensional $C^1$ manifold which is $(s,C_0)$-AD regular for some $s>0$. 
 That is,
 $$C_0^{-1}r^s\leq\HH^s(E\cap B(x,r))\leq C_0\,r^s\quad \mbox{ for all $x\in E$ and $0<r\leq \diam(E)$.}$$
 Then there exists $\ve>0$ small enough depending on $C_0$ such that if $s\in (n-\frac12-\ve,n)$, we have $\dim \omega<s$.
 \end{theorem}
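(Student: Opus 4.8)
The plan is to mimic the proof of Theorem~\ref{teoC1}, combining the Attouch--Wets compactness argument with Lemma~\ref{lem-dim}, but now invoking the part of Main Lemma~\ref{mainlemma} that covers the range $s\in(n-\frac12-\ve,n-\frac12)$. Since Theorem~\ref{teoC1} already settles $s\in[n-\frac12,n)$, it suffices to treat $s\in(n-\frac12-\ve_0,n-\frac12)$ for a suitable $\ve_0=\ve_0(n,C_0)>0$.

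To fix constants: given $n$ and $C_0>1$, let $M=M(n,s,C_0)$ be the constant from Lemma~\ref{lem-dim}. Since the problem does not degenerate as $s\uparrow n-\frac12$, one checks that $M(n,s,C_0)\leq M_0(n,C_0)$ for $s$ below and close to $n-\frac12$; moreover, inspecting Subsection~\ref{sec2.3*}, the threshold $\ve$ needed in Main Lemma~\ref{mainlemma} for a given $M$ can be bounded below in terms of $M$ alone. Hence there is $\ve_0=\ve_0(n,C_0)\in(0,\tfrac14]$ such that, for every $s\in(n-\frac12-\ve_0,n-\frac12)$, Main Lemma~\ref{mainlemma} holds for $(s,C_0)$-AD regular subsets of a hyperplane with the corresponding $M=M(n,s,C_0)$.

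Next I would establish a variant of Lemma~\ref{lemcompactness} valid in this range: for each $s\in(n-\frac12-\ve_0,n-\frac12)$ there are $\gamma=\gamma(n,s,C_0)>0$ and $c=c(n,s,C_0)>0$ such that, whenever $E\subset\R^{n+1}$ is $(s,C_0)$-AD regular and compact, $x\in E$, $0<r\leq\diam(E)$ with $\beta_{\infty,E}(x,\gamma^{-1}r)\leq\gamma^2$, and $p\in\R^{n+1}\setminus(E\cup B(x,2r))$, there is a ball $B(y,\rho)$ with $y\in B(x,r)$, $c\,r\leq\rho\leq r$, satisfying the dichotomy~\rf{eq1**}. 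The proof runs exactly as for Lemma~\ref{lemcompactness}, by contradiction: one obtains $(s,C_0)$-AD regular compact sets $E_k$, points $x_k\in E_k$, radii $r_k$, and poles $p_k\notin E_k\cup B(x_k,2r_k)$, with $\beta_{\infty,E_k}(x_k,kr_k)\leq k^{-2}$ and with~\rf{eq1**} failing for every admissible ball; after translating and rescaling so that $x_k=0$ and $r_k=1$, and using the uniformity and $s$-AD regularity of $\R^{n+1}\setminus E_k$ to assume $p_k\in\overline{A(0,2,3)}$ with $\dist(p_k,E_k)\geq c'>0$, one passes to a subsequence with $E_k\to\wt E$ in the Attouch--Wets topology ($\wt E$ again $(s,C_0)$-AD regular) and $p_k\to\wt p\in\overline{A(0,2,3)}$, $\dist(\wt p,\wt E)\geq c'>0$, whence $\omega_k^{p_k}$ converges weakly-$*$ to $\wt\omega^{\wt p}$, the harmonic measure for $\R^{n+1}\setminus\wt E$. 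Since $\beta_{\infty,E_k}(0,R)\leq(kR)^{-1}\to0$ for each fixed $R$, the set $\wt E$ lies in a hyperplane through the origin, and the failure of~\rf{eq1**} passes to the limit for $x=0$, $r=1$, contradicting Main Lemma~\ref{mainlemma} (applicable because $s\in(n-\frac12-\ve_0,n-\frac12)$).

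Finally, as in the proof of Theorem~\ref{teoC1}: if $E$ is compact, $(s,C_0)$-AD regular with $s\in(n-\frac12-\ve_0,n-\frac12)$, and contained in an $n$-dimensional $C^1$ manifold, then $\beta_{\infty,E}(x,r)\to0$ as $r\to0$ uniformly in $x\in E$, so the hypotheses of the variant lemma hold at every ball $B(x,r)$ centered in $E$ of sufficiently small radius; combining this with the case $s\in[n-\frac12,n)$ already handled by Theorem~\ref{teoC1}, we obtain, for all $s\in(n-\frac12-\ve_0,n)$, the hypotheses of Lemma~\ref{lem-dim} (adjusting the constant $c_1$ there if needed), hence $\dim\omega<s$. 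The step I expect to be most delicate is the bookkeeping in the second paragraph: one must check that $M=M(n,s,C_0)$ does not blow up as $s\uparrow n-\frac12$, so that the threshold $\ve_0$ of Main Lemma~\ref{mainlemma} can be taken to depend only on $n$ and $C_0$ (as the statement requires), together with the standard but non-automatic fact that $(s,C_0)$-AD regularity is preserved under Attouch--Wets limits.
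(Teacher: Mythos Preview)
Your proposal is correct and follows essentially the same approach as the paper: the paper's own proof is only a two-line sketch (``it suffices to consider the case $s\in(n-\frac12-\ve,n-\frac12)$ and then one can use arguments similar to the ones in Lemma~\ref{lemcompactness}''), and you have accurately reconstructed those details via the Attouch--Wets compactness argument feeding into Lemma~\ref{lem-dim}. The bookkeeping concern you flag in the last paragraph---that $M(n,s,C_0)$ stay bounded as $s\uparrow n-\frac12$ so that $\ve_0$ depends only on $n,C_0$---is real and is also glossed over in the paper; it is handled by noting that the proof of Lemma~\ref{lem-dim} (as in \cite{Azzam-drop}) gives a threshold $M$ that is uniform for $s$ in any compact subinterval of $(n-1,n)$, and that the constants in Section~\ref{sec2.3*} are explicitly uniform in $s$ for $s\in(n-\frac34,n-\frac12]$.
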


To prove this, it suffices to consider the case $s\in (n-\frac12-\ve,n-\frac12)$ and then one can use arguments similar to ones in Lemma \ref{lemcompactness}. The details are left for the reader.



\vv

\vv

\end{document}